\newtheorem{theorem}{Theorem}
\newtheorem{lemma}[theorem]{Lemma}
\newtheorem{corollary}[theorem]{Corollary}
\newtheorem{conjecture}{Conjecture}
\theoremstyle{definition}
\newtheorem{remark}{Remark}
\newtheorem{thmalpha}{Theorem}
\newcommand{\N}{\mathbb{N}}
\newcommand{\R}{\mathbb{R}}
\newcommand{\IB}{\mathbb{B}}
\newcommand{\ID}{\mathbb{D}}
\newcommand{\IE}{\mathbb{E}}
\newcommand{\IN}{\mathbb{N}}
\newcommand{\IP}{\mathbb{P}}
\newcommand{\IR}{\mathbb{R}}
\newcommand{\lou}{\IB_{q,p}^n}
\newcommand{\lo}{\widetilde{\ID}_{q,p}^n}
\newcommand{\loo}{\widetilde{\ID}_{q,1}^n}
\newcommand{\loou}{\IB_{q,1}^n}
\newcommand{\lop}{\widetilde{\ID}_{q,p,+}^n}
\newcommand{\vol}{\mathrm{vol}}
\newcommand{\pr}{\mathscr{M}_1(\mathbb{R}_+)}
\newcommand{\dd}{\mathrm{d}}
\newcommand{\Cov}{\mathrm{Cov}}
\newcommand{\Var}{\mathrm{Var}}
\newcommand{\ed}{{\rm d}}
\begin{document}

\title{\bf A probabilistic approach to Lorentz balls}

\medskip

\author{Zakhar Kabluchko, Joscha Prochno and Mathias Sonnleitner}



\date{}

\maketitle

\begin{abstract}
\small
 We develop a probabilistic approach to study the volumetric and geometric properties of unit balls $\IB_{q,1}^n$ of finite-dimensional Lorentz sequences spaces $\ell_{q,1}^n$. More precisely, we show that the empirical distribution of a random vector $X^{(n)}$ uniformly distributed on the volume normalized Lorentz ball in $\R^n$ converges weakly to a compactly supported symmetric probability distribution with explicitly given density; as a consequence we obtain a weak Poincar\'e-Maxwell-Borel principle for any fixed number $k\in\N$ of coordinates of $X^{(n)}$ as $n\to\infty$. Moreover, we prove a central limit theorem for the largest coordinate of $X^{(n)}$, demonstrating a quite different behavior than in the case of the $\ell_q^n$ balls, where a Gumbel distribution appears in the limit. Last but not least, we prove a Schechtman-Schmuckenschl\"ager type result for the asymptotic volume of intersections of volume normalized Lorentz and $\ell^n_p$ balls.
\medspace
\vskip 1mm
\noindent{\bf Keywords}. {Asymptotic volume, central limit theorem, concentration of measure, convex body, Lorentz space, maximum entropy principle, Poincar\'e-Maxwell-Borel principle}\\
{\bf MSC}. Primary 46B09, 52A23, 60F05; Secondary 46B06, 46B20, 46B45, 94A17
\end{abstract}


\section{Introduction}

The last decades have revealed a deep connection between the geometry of high-dimensional convex bodies and probability theory, and each field fruitfully influenced the other. Probabilistic methods naturally come into play when one considers a (high-dimensional) convex body in $\IR^n$, i.e., a compact and convex set with non-empty interior, as a probability space when it is endowed with the canonical Borel $\sigma$-field and the normalized uniform measure.

Probably the most prominent family of convex bodies are the unit balls $\IB_p^n$ of the classical finite-dimensional sequence spaces $\ell_p^n$ ($1\leq p\leq \infty$). This parametric family of bodies is arguably one the most studied ones in geometric functional analysis and their analytic and geometric properties are quite well understood today. In numerous instances it is the already mentioned probabilistic point of view that gives access to understanding the asymptotic structure as the space dimension $n$ tends to infinity, because there is a rather simple probabilistic representation of the uniform distribution on $\IB_p^n$. This representation allows one to go from a random vector uniformly distributed on $\IB_p^n$, and thus from one having dependent coordinates for $p<\infty$, to a random vector whose entries are independent and identically distributed according to the so-called $p$-Gaussian distribution. This heavily facilitates computations and is therefore one of the key tools used in the study of $\ell_p^n$ balls. It were Schechtman and Zinn \cite{SZ1990}, and independently Rachev and R\"uschendorf \cite{RR1991}, who showed that if $X=(X_1,\dots,X_n)$ is distributed uniformly at random on $\IB_p^n$, then
\begin{align}\label{eq:schechtman-zinn representation}
X \stackrel{\ed}{=} U^{1/n}\frac{Y}{\|Y\|_p},
\end{align}
where $U$ is distributed uniformly on $[0,1]$ and $Y=(Y_1,\dots,Y_n)$ is independent of $U$ with $Y_1,\dots,Y_n$ being independent and identically distributed with Lebesgue density on $\IR$ given by
\[
f_p(x):=
\begin{cases}
\frac{1}{2p^{1/p}\Gamma(1+\frac{1}{p})}e^{-|x|^p/p} &: 1 \leq p < \infty \cr
\frac{1}{2} \mathbbm 1_{[-1,1]}(x) &: p=\infty.
\end{cases}
\]
Here and below, $\stackrel{\ed}{=}$ denotes equality in distribution. The previous result was lifted in \cite{BGMN2005} to a wider class of distributions related to $\ell_p^n$ balls that include the uniform distribution and the distribution with respect to the cone probability measure as special cases. As we mentioned above, the representation presented as well as its generalization are frequently used in the asymptotic analysis of geometric and volumetric aspects of those unit balls and we refer, for instance, to \cite{APT2019,APT2021,GKR2017,JP2022,KPT2019_I,N2007,NR2003,SS1991,SZ2000} and the survey article \cite{PTT2020}.

A natural and frequently studied generalization of $\ell_p$ spaces (and their function space counterparts), which is classical not only in functional analysis \cite{BGVV2014,LT1977}, harmonic analysis \cite{HS1975} and optimization \cite{KMW2011}, is the class of Orlicz spaces. The finite-dimensional Orlicz space $\ell_M^n$ is $\IR^n$ endowed with the norm
\[
\|(x_i)_{i=1}^n\|_M := \inf\Bigg\{ \rho>0\,:\, \sum_{i=1}^n M\Big(\frac{|x_i|}{\rho} \Big) \leq 1 \Bigg\},
\]
where $M:\IR \to\IR$ is symmetric, convex, and satisfies both $M(0)=0$ and $M(x)>0$ for $x\neq 0$ (when $M(t)=|t|^p$, we have $\|\cdot\|_M=\|\cdot\|_p$). As in the case of $\ell_p^n$ spaces, the Orlicz spaces belong to the important class of finite-dimensional $1$-symmetric Banach spaces, i.e., spaces $(X,\|\cdot\|_X)$ with a basis $e_1,\dots,e_n\in X$ such that
\[
\Big\|\sum_{i=1}^n x_ie_i\Big\|_X = \Big\|\sum_{i=1}^n \varepsilon_ix_{\pi(i)}e_i \Big\|_X
\]
holds for all $x_1,\dots,x_n\in\IR$, all signs $\varepsilon_1,\dots,\varepsilon_n\in\{-1,1\}$, and all permutations $\pi$ of the numbers $\{1,\dots,n\}$; Orlicz spaces are intensively studied in the functional analysis literature and we refer to \cite{HKTJ2006,K1984,KS1985,PS2012,Sch1995} and references cited therein. Trying to lift or generalize results that can be obtained for the spaces $\ell_p^n$ by employing the Schechtman-Zinn probabilistic representation \eqref{eq:schechtman-zinn representation} one soon hits a dead end, because such representation in the case of unit balls $\IB_M^n$ of Orlicz spaces $\ell_M^n$ is not known. As a consequence, generalizations of results regarding the asymptotic structure of $\IB_p^n$ (as mentioned above) remained inaccessible for quite a while. Recently, Kabluchko and Prochno \cite{KP2021}, using maximum entropy considerations in the framework of non-interacting particles that have their origin in statistical mechanics, derived an \emph{asymptotic} version of a Schechtman-Zinn type representation for Orlicz spaces, relating the uniform distribution on Orlicz balls to Gibbs distributions with potential given by the Orlicz functions. This connection allowed to obtain a number of further results in the general setting of Orlicz balls, wich can be found, for instance, in \cite{AP2022,BW2021,FP2021,JP2023,KLR2022}.

Alongside Orlicz spaces, the second natural class of generalizations of $\ell_p$ spaces are Lorentz spaces, which were introduced by George Lorentz in the 1950s \cite{L1950,L1951}. In fact, it had already been observed by Marcinkiewicz in \cite{M1939} that Lebesgue spaces are not sufficient to capture the fine properties of operators on $L_p$ spaces. Since their introduction, Lorentz spaces have found numerous applications in different areas of mathematics such as approximation theory \cite{dVL1993}, harmonic analysis \cite{G2014}, interpolation theory \cite{BS1988}, and signal processing \cite{FR2013}. We are interested in the finite-dimensional Lorentz space $\ell_{q,p}^n$ with $1\leq p\leq q \leq \infty$, which is merely $\IR^n$ with the norm
\[
\|(x_i)_{i=1}^n\|_{q,p}:= \Big(\sum_{i=1}^{n}|i^{1/q-1/p}x_i^*|^p\Big)^{1/p},
\]
where $x_1^*,\dots,x_n^*$ is the non-increasing rearrangement of the numbers $|x_1|,\dots,|x_n|$ (when $q=p$, we have $\|\cdot\|_{q,p}=\|\cdot\|_p$). Those spaces again belong to the important class of $1$-symmetric Banach spaces and have also been intensively studied in the functional analysis literature, for instance, in \cite{A1975,DV2020,HM2006,JMST1979,P2020,R1981,R1982,Sch1984,Sch1989,TJ1989}. Indeed, unexpected phenomena can occur in these spaces; for example, as proved in \cite{EN11}, they are a counterexample to a conjecture on the interpolation behaviour of entropy numbers which holds true for $\ell_p$ spaces. Results regarding a probabilistic approach to the asymptotic volumetric and geometric structure of unit balls $\IB_{q,p}^n$ in Lorentz spaces $\ell_{q,p}^n$, of the same flavor as the ones mentioned for $\IB_p^n$ or $\IB_M^n$ above, are completely absent from the literature. One reason is arguably that the non-increasing rearrangement that appears in the definition of the norm makes any analysis quite delicate.

The motivation of our paper is thus twofold, namely to develop a first probabilistic approach on the one hand and apply this to study some asymptotic properties of Lorentz spaces $\ell_{q,p}^n$ on the other. In view of the vast literature regarding a probabilistic take on $\ell_p^n$ or more generally $\ell_M^n$ spaces and their unit balls, this is a first step towards approaching numerous natural questions that arise, like central limit theorems, conditional limit theorems, or moderate and large deviations \cite{APT2019,APT2021,FP2021,GKR2017,JP2022,KPT2019_I,KR18}, asymptotic volume ratios and thin-shell measure concentration \cite{AP2022,KP2021}, asymptotic independence of coordinates \cite{BW2021} or asymptotic thin-shell results \cite{KLR2022}, just to mention a few.

\section{Main results}\label{sec:main results}

We shall now present our main results, starting with the probabilistic approach to the uniform distribution on Lorentz balls and continuing with the applications to geometric and volumetric properties. The results are restricted to the case of Lorentz balls with parameters $p=1$ and $1<q\leq\infty$, but we present a conjecture together with heuristic arguments on what asymptotic probabilistic representation to expect in the general setting.

We shall denote by
\[
\lo
:=n^{1/q}\lou
=\Bigg\{x\in\IR^n\colon \frac{1}{n}\sum_{i=1}^{n}\Big(\frac{i}{n}\Big)^{p/q-1}|x_i^*|^p\le 1\Bigg\}
\]
for $q<\infty$ and
\[
\widetilde{\ID}_{\infty,1}^n
:=\log(n+1)\IB_{\infty,1}^n
=\Bigg\{x\in\IR^n\colon \frac{1}{\log(n+1)}\sum_{i=1}^{n}\frac{1}{i}|x_i^*|\le 1\Bigg\}.
\]
for $q=\infty$ the normalized unit balls such that, asymptotically up to constants, they have Lebesgue volume 1. Indeed, it follows from a classical result on the volume of unit balls in Banach spaces due to Sch\"utt \cite{Sch1982} that $\vol_n(\IB_{q,1}^n)^{1/n}\asymp_q n^{1/q}$ and, as shown in \cite[Theorem 7]{DV2020}, for $q=\infty$ we have $\vol_n(\IB_{\infty,1}^n)^{1/n} \asymp \log(n+1)^{-1}$; here $\asymp_q$ denotes equivalence up to constants depending only on $q$ and $\asymp$ equivalence up to absolute constants. 

The following stochastic representation constitutes the basis for our results and is of independent interest. It is the probabilistic entry point into the study of the asymptotic structure of Lorentz balls in the case $p=1$. 

\begin{thmalpha}\label{thm:stochastic-rep}
	Let $1\le q\le \infty$, $n\in\IN$ and assume $X^{(n)}$ is uniformly distributed on $\loou$. Then
\[
X^{(n)}\overset{\rm d}{=}\Bigg(\varepsilon_1\frac{\sum_{j=\pi(1)}^{n}\kappa_{q}(j)^{-1}E_j}{\sum_{j=1}^{n+1}E_j},\dots,\varepsilon_n\frac{\sum_{j=\pi(n)}^{n}\kappa_{q}(j)^{-1}E_j}{\sum_{j=1}^{n+1}E_j}\Bigg),
\]
where $E_1,\dots,E_{n+1}$ are standard exponential random variables, $\varepsilon=(\varepsilon_1,\dots,\varepsilon_n)$ is uniformly distributed on $\{-1,1\}^n$, $\pi$ is uniformly distributed on the symmetric group $S_n$ of permutations of $\{1,\dots,n\}$, and for $j\in\{1,\dots,n\}$,
\[
\kappa_{q}(j)
:=\sum_{i=1}^{j}i^{1/q-1}.
\]
All random objects are independent of each other.
\end{thmalpha}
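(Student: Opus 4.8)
The plan is to use that $\loou=\IB_{q,1}^n$ is a $1$-symmetric convex body and to disintegrate the uniform distribution on it over the $2^n n!$ congruent chambers cut out by the coordinate hyperplanes and the ``order hyperplanes'' $\{x_i=x_j\}$. First I would peel off the signs and the order of the coordinates. Write $X^{(n)}=(\varepsilon_1|X^{(n)}_1|,\dots,\varepsilon_n|X^{(n)}_n|)$ with $\varepsilon_i=\mathrm{sgn}(X^{(n)}_i)$; since the density $\vol_n(\loou)^{-1}\mathbbm 1_{\loou}$ is even in each coordinate and (almost surely) no coordinate vanishes, $\varepsilon$ is uniform on $\{-1,1\}^n$ and independent of $|X^{(n)}|:=(|X^{(n)}_1|,\dots,|X^{(n)}_n|)$. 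Let $X^{*}=(X_1^{*},\dots,X_n^{*})$ be the non-increasing rearrangement of $|X^{(n)}|$ and let $\pi\in S_n$ be the rank map, $\pi(i):=\#\{j:|X^{(n)}_j|\ge|X^{(n)}_i|\}$, so that $|X^{(n)}_i|=X^{*}_{\pi(i)}$. Because the density of $|X^{(n)}|$ is permutation invariant and its coordinates are almost surely distinct, conditionally on $X^{*}$ the vector $|X^{(n)}|$ is uniform over the $n!$ rearrangements of $X^{*}$; via the bijection between rearrangements of a vector with distinct entries and $S_n$ this means $\pi$ is uniform on $S_n$ and independent of $X^{*}$, and $\varepsilon,\pi,X^{*}$ are mutually independent. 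It then remains to identify the law of $X^{*}$.

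The second step is a polytope computation. Let $R:=\{x\in\IR^n:x_1\ge x_2\ge\dots\ge x_n\ge0\}$. On $R$ the non-increasing rearrangement is the identity, so $R\cap\loou=\{x\in R:\sum_{i=1}^n i^{1/q-1}x_i\le1\}$, with $i^{1/q-1}$ read as $1/i$ when $q=\infty$ (so that $\kappa_\infty(j)=\sum_{i=1}^j 1/i$). The cone $R$ is simplicial: writing $v_k:=e_1+\dots+e_k$, every $x\in R$ has the unique representation $x=\sum_{k=1}^n c_kv_k$ with $c_k:=x_k-x_{k+1}\ge0$ (and $x_{n+1}:=0$), and then $\sum_{i=1}^n i^{1/q-1}x_i=\sum_{k=1}^n c_k\kappa_q(k)$. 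Substituting $d_k:=c_k\kappa_q(k)$ shows that
\[
R\cap\loou=\Big\{\,\sum_{k=1}^n d_k\tfrac{v_k}{\kappa_q(k)}\ :\ d_k\ge0,\ \sum_{k=1}^n d_k\le1\,\Big\}=T,
\]
the simplex $T:=\mathrm{conv}\{0,v_1/\kappa_q(1),\dots,v_n/\kappa_q(n)\}$, which is non-degenerate since $v_1,\dots,v_n$ are linearly independent. Conditioning the uniform law on $\loou$ on the event $\{X^{(n)}\in R\}$ gives the uniform law on $R\cap\loou=T$, and this conditional law has density proportional to $\mathbbm 1_T$, exactly as does the law of $X^{*}$; hence $X^{*}$ is uniformly distributed on $T$.

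The third step invokes the standard representation of the uniform distribution on a simplex. If $(D_0,\dots,D_n)$ is uniform on the standard simplex $\{d\in\IR^{n+1}:d_k\ge0,\ \sum_k d_k=1\}$, then $X^{*}\stackrel{\ed}{=}\sum_{k=0}^n D_k a_k$ with $a_0=0$ and $a_k=v_k/\kappa_q(k)$ for $k\ge1$, because the barycentric-coordinate map is an affine bijection onto $T$; and $(D_0,\dots,D_n)\stackrel{\ed}{=}(E_{n+1},E_1,\dots,E_n)/\sum_{j=1}^{n+1}E_j$ for i.i.d.\ standard exponentials $E_1,\dots,E_{n+1}$ (the Dirichlet$(1,\dots,1)$ law as normalized gammas). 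Reading off the $r$-th coordinate and using $(v_k)_r=\mathbbm 1\{r\le k\}$ gives
\[
X^{*}_r\stackrel{\ed}{=}\frac{\sum_{j=r}^n\kappa_q(j)^{-1}E_j}{\sum_{j=1}^{n+1}E_j},
\]
and substituting into $X^{(n)}_i=\varepsilon_i X^{*}_{\pi(i)}$ yields the asserted identity in law.

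I expect the only genuinely delicate point to be the bookkeeping in the first step: making precise the disintegration of the uniform measure over congruent chambers, in particular that the rank permutation $\pi$ is both uniform on $S_n$ and independent of the order statistics $X^{*}$, and that all the exceptional sets (coordinate ties, vanishing coordinates, chamber boundaries) are Lebesgue-null and therefore irrelevant. The geometric identification $R\cap\loou=T$ and the exponential representation of the Dirichlet distribution are routine once the reduction is in place, and the argument is uniform in $1\le q\le\infty$.
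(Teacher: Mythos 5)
Your proposal is correct and follows essentially the same route as the paper: the same identification of the chamber $R\cap\loou$ as the simplex with vertices $0$ and $v_k/\kappa_q(k)$ via the summing-basis change of variables, and the same Dirichlet/exponential representation of the uniform law on that simplex. The only (cosmetic) difference is direction — you decompose the uniform law on $\loou$ into signs, rank permutation and order statistics, whereas the paper constructs a uniform point on the chamber and then symmetrizes.
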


According to Theorem \ref{thm:stochastic-rep}, the first (or any) coordinate of a random vector uniformly distributed on $\loo=n^{1/q}\loou$ for $q<\infty$ is equal in distribution to
\begin{equation} \label{eq:stochastic-rep}
\varepsilon_1\frac{\frac{1}{n}\sum_{i=u_n}^{n}\frac{n^{1/q}}{\kappa_q(i)}E_i}{\frac{1}{n}\sum_{i=1}^{n+1}E_i},
\end{equation}
where $u_n$ is uniformly distributed on $\{1,\dots,n\}$ and $\varepsilon_1$ is uniformly distributed on $\{-1,1\}$.

\begin{remark}\label{rem:one}
Note that the special case $q=1$ of Theorem~\ref{thm:stochastic-rep} corresponds to the $\ell_1^n$ ball and is consistent to the well-known fact that the order statistics of $n$ independent standard exponential random variables are distributed as
\[
\frac{E_n}{n},\frac{E_n}{n}+\frac{E_{n-1}}{n-1},\dots,\sum_{j=1}^{n}\frac{E_j}{j}
\]
beginning with the smallest ($n^{\text{th}}$) order statistic up to the largest ($1^{\text{st}}$) (see, e.g., \cite[Eq. (2.5.5)]{DN03}).
\end{remark}
As a consequence of the proof of Theorem \ref{thm:stochastic-rep}, we can deduce the following precise asymptotics for the volume radius; the following can also be obtained by refining the calculations from \cite{DV2020}. 

\begin{corollary}\label{cor:volume}
Let $1\le q \le\infty$. As $n\to\infty$, we have
\begin{equation} \label{eq:asymptotics}
\vol_n(\loou)^{1/n}
\sim 
\begin{cases}
	\frac{2}{q}e^{1/q}n^{-1/q}&: 1\le q<\infty,\\
	2(\log n)^{-1}&: q=\infty.
\end{cases}
\end{equation}
\end{corollary}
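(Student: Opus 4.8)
The plan is to extract the volume of $\loou$ directly from the proof of Theorem~\ref{thm:stochastic-rep}. The stochastic representation is obtained by writing the uniform distribution on $\IB_{q,1}^n$ via a change of variables: on the positive orthant the Lorentz ball with norm $\leq 1$ (before sorting is removed) corresponds, after passing to order statistics, to a simplex-type region, and the representation in Theorem~\ref{thm:stochastic-rep} arises from the familiar fact that a uniform point in an $(n)$-dimensional simplex can be written using $n+1$ independent exponentials normalized by their sum. Concretely, the map sending $(E_1,\dots,E_n)$ (or rather the partial sums against the weights $\kappa_q(j)^{-1}$) to the sorted coordinates has a constant Jacobian, and tracking the normalizing constants in that computation yields an exact formula
\[
\vol_n(\IB_{q,1}^n) = \frac{2^n}{n!}\,\prod_{j=1}^{n}\kappa_q(j)^{-1}\cdot c_n
\]
for an explicit combinatorial factor $c_n$ (the $2^n$ from the sign vector $\varepsilon$, the $n!$ or a related factor from symmetrizing over the permutation $\pi$, i.e. from passing between sorted and unsorted coordinates). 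The first step is therefore to write down this exact finite-$n$ formula cleanly from the change of variables in the proof of Theorem~\ref{thm:stochastic-rep}.

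Once an exact product formula is in hand, the second step is purely asymptotic: take $\frac1n\log$ of the formula and apply Stirling to the $n!$ term, giving a contribution $-\log n + 1 + o(1)$, and analyze $\frac1n\sum_{j=1}^{n}\log\kappa_q(j)$. For $1\le q<\infty$ one has $\kappa_q(j)=\sum_{i=1}^j i^{1/q-1}\sim \frac{q}{\,}\, j^{1/q}\cdot\frac1{1}$, more precisely $\kappa_q(j)\sim q\, j^{1/q}$ as $j\to\infty$ (since $\sum_{i\le j} i^{1/q-1}\sim \frac{j^{1/q}}{1/q}=q j^{1/q}$), so $\log\kappa_q(j)=\frac1q\log j+\log q+o(1)$ and hence $\frac1n\sum_{j=1}^n\log\kappa_q(j)=\frac1q\big(\log n-1\big)+\log q+o(1)$ by Stirling again. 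Combining, $\frac1n\log\vol_n(\IB_{q,1}^n)=\log 2-\big(\log n-1\big)-\frac1q(\log n-1)-\log q+o(1)$; exponentiating and collecting the $n^{-1-1/q}$ against the intended $n^{-1/q}$ — here one must be careful, and in fact the combinatorial factor $c_n$ must supply an extra $n^{\,1}$-type growth so that the stated answer $\frac2q e^{1/q} n^{-1/q}$ comes out — so the bookkeeping of $c_n$ in step one is essential and must be done honestly. For $q=\infty$ the same scheme applies with $\kappa_\infty(j)=\sum_{i=1}^j i^{-1}=\log j+\gamma+o(1)$, so $\frac1n\sum_{j=1}^n\log\kappa_\infty(j)=\log\log n+o(\log\log n)$ after estimating $\sum_{j\le n}\log\log j$, which yields the $\log n$ in the denominator and the constant $2$.

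The main obstacle I anticipate is not the asymptotics but getting the exact constant in the finite-$n$ volume formula right — i.e. correctly accounting for all the combinatorial and Jacobian factors (the $2^n$, the interplay between the $n!$ orderings and the region of sorted coordinates, and the $n+1$ exponentials versus $n$ degrees of freedom) so that the subleading polynomial powers of $n$ cancel to leave exactly $n^{-1/q}$ rather than $n^{-1-1/q}$ or $n^{1-1/q}$. A useful sanity check along the way is the case $q=1$: there $\kappa_1(j)=j$, the formula must reduce to $\vol_n(\IB_1^n)=2^n/n!$, and Corollary~\ref{cor:volume} must give $\vol_n(\IB_1^n)^{1/n}\sim 2e/n$, which is exactly Stirling applied to $2^n/n!$ — this pins down $c_n$ and confirms the normalization. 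An alternative, as the statement notes, is to quote the volume asymptotics from \cite{DV2020} and merely sharpen the constants there, but deriving it from the stochastic representation is cleaner and self-contained given Theorem~\ref{thm:stochastic-rep}.
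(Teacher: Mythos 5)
Your overall route is the same as the paper's: extract an exact product formula for $\vol_n(\loou)$ from the simplex structure underlying Theorem~\ref{thm:stochastic-rep}, then take $\tfrac1n\log$ and apply Stirling together with $\kappa_q(j)\sim qj^{1/q}$. But you have left the one genuinely load-bearing step --- the exact value of your factor $c_n$ --- unresolved, and as you yourself note, without it the powers of $n$ do not come out right. The resolution is Lemma~\ref{lem:extreme}: the intersection of $\loou$ with the Weyl chamber $W=\{x_1\ge\cdots\ge x_n\ge 0\}$ is exactly $M\big(\mathrm{conv}\{0,e_1,\dots,e_n\}\big)$ with $M$ upper triangular and $\det M=\prod_{i=1}^n\kappa_q(i)^{-1}$, so $\vol_n(\loou\cap W)=\tfrac{1}{n!}\prod_{i=1}^n\kappa_q(i)^{-1}$. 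Since $\loou$ is $1$-symmetric, it is covered (up to null sets) by the $2^n n!$ images of this piece under signs and permutations, whence
\[
\vol_n(\loou)=2^n\,n!\cdot\vol_n(\loou\cap W)=2^n\prod_{i=1}^n\kappa_q(i)^{-1}.
\]
In your notation $c_n=n!$ exactly; the $n!$ from the simplex volume cancels against the $n!$ from symmetrization, there is no residual $-(\log n-1)$ term, and the asymptotics reduce to $-\tfrac1n\sum_{i\le n}\log\kappa_q(i)=-\log q-\tfrac1q\log n+\tfrac1q+o(1)$ (Stirling plus Ces\`aro on $\log(\kappa_q(i)/(qi^{1/q}))\to0$), which gives $\tfrac2q e^{1/q}n^{-1/q}$. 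Your $q=1$ sanity check does pin this down correctly, so the gap is fillable, but as written the proof is incomplete at precisely the point you flag.

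One smaller issue: for $q=\infty$ you claim $\tfrac1n\sum_{j\le n}\log\kappa_\infty(j)=\log\log n+o(\log\log n)$. An error of size $o(\log\log n)$ is not enough to conclude the constant $2$, since $\exp(-o(\log\log n))$ need not tend to $1$ (e.g.\ an error of order $\log\log\log n$ would destroy the constant). You need an $o(1)$ error, which does hold: comparing $\sum_{j\le n}\log\log j$ with $\int_3^n\log\log x\,\dd x$ and using the asymptotics of the logarithmic integral gives $\tfrac1n\sum_{j\le n}\log\log j=\log\log n+O(1/\log n)$, and $\kappa_\infty(j)=\log j+O(1)$ contributes only an $o(1)$ correction by Ces\`aro. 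With these two points repaired, your argument coincides with the paper's proof.
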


In our next theorem, we establish a weak convergence result for the empirical distribution. It is also used to establish the asymptotic distribution of a single coordinate presented afterwards.

\begin{thmalpha} \label{thm:main}
	Let $1<q\le\infty$ and for each $n\in\IN$ assume that $\widetilde{X}^{(n)}=(\widetilde{X}_1^{(n)},\dots,\widetilde{X}_n^{(n)})$ is a random vector uniformly distributed on $\loo$. Then, for every bounded continuous function $f\colon \IR\to\IR$, we have
\[
\frac{1}{n}\sum_{i=1}^{n}f(\widetilde{X}_i^{(n)})\xrightarrow[n\to\infty]{\IP} \int_{\IR} f(x)\, \nu_{q,1}(\dd x),
\]
where $\nu_{q,1}$ is a symmetric probability measure on $\IR$ with Lebesgue density $f_{q,1}\colon \IR\to\IR$ given by
\[
f_{q,1}(x) := \frac{1}{2}
\begin{cases}
q(1-(q-1)|x|)^{1/(q-1)}\mathbbm{1}_{[-\frac{1}{q-1},\frac{1}{q-1}]}(x) & \colon  q<\infty\\
\mathbbm{1}_{[-1,1]}(x) & \colon q=\infty.
\end{cases}
\]
\end{thmalpha}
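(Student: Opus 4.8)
The plan is to start from the stochastic representation in Theorem~\ref{thm:stochastic-rep}. After rescaling by $n^{1/q}$ (resp.\ $\log(n+1)$), the $i$-th coordinate of $\widetilde X^{(n)}$ equals in distribution
\[
\widetilde X^{(n)}_i \overset{\rm d}{=} \varepsilon_i\,\frac{\frac1n\sum_{j=\pi(i)}^{n} \frac{n^{1/q}}{\kappa_q(j)}E_j}{\frac1n\sum_{j=1}^{n+1}E_j},
\]
so the empirical measure $\frac1n\sum_{i=1}^n\delta_{\widetilde X^{(n)}_i}$ is, after symmetrizing by the i.i.d.\ signs $\varepsilon_i$, governed by the empirical behaviour of the quantities $S_{\pi(i)}:=\frac1n\sum_{j=\pi(i)}^n \frac{n^{1/q}}{\kappa_q(j)}E_j$ as $\pi(i)$ ranges (in a uniformly random order) over $\{1,\dots,n\}$. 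Since the common denominator $D_n:=\frac1n\sum_{j=1}^{n+1}E_j\to 1$ a.s.\ by the law of large numbers, and since multiplying a weakly convergent empirical measure by a deterministic sequence tending to $1$ does not change the limit, I would first dispose of the denominator and reduce to analysing $\frac1n\sum_{i=1}^n g(S_i)$ for bounded continuous $g$ (taking $g(x)=\frac12(f(x)+f(-x))$ to absorb the signs; the odd part of $f$ averages out).

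The core step is a law-of-large-numbers / Glivenko--Cantelli statement for the array $\{S_1,\dots,S_n\}$: I claim $\frac1n\sum_{i=1}^n g(S_i)\to\int g\,d\mu_q$ in probability, where $\mu_q$ is the distribution of the corresponding deterministic profile. Concretely, one shows that $S_i$ concentrates around its mean: $\IE S_i = \frac1n\sum_{j=i}^n \frac{n^{1/q}}{\kappa_q(j)} =: a_{n,i}$, and $\Var S_i = \frac1{n^2}\sum_{j=i}^n \frac{n^{2/q}}{\kappa_q(j)^2}$. Using $\kappa_q(j)=\sum_{k=1}^j k^{1/q-1}\sim q\,j^{1/q}$ for $q<\infty$ (and $\kappa_\infty(j)\sim\log j$), one gets $\frac{n^{1/q}}{\kappa_q(j)}\sim \frac1q (n/j)^{1/q}$, hence $a_{n,i}\sim \frac1q\cdot\frac1n\sum_{j=i}^n (n/j)^{1/q}$, which for $i=\lceil tn\rceil$ converges to $\frac1q\int_t^1 s^{-1/q}\,ds = \frac{1}{q-1}(1-t^{(q-1)/q})$. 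The variance is $O(n^{-1/q})\to 0$ (the sum $\sum_j (n/j)^{2/q}$ is of order $n$ when $q>2$ and of order $n^{2/q}$ when $q<2$, divided by $n^2$ in both cases going to $0$). So $S_{\lceil tn\rceil}\to \varphi_q(t):=\frac{1}{q-1}(1-t^{(q-1)/q})$ in probability for each fixed $t\in(0,1)$; for $q=\infty$ the limit is $\varphi_\infty(t)=1-\log t$... wait, recheck: with $\kappa_\infty(j)\sim\log j$ and the normalization $\log(n+1)$ one gets $a_{n,i}\sim \frac1n\sum_{j=i}^n \frac{\log n}{\log j}$; since $\log j\sim\log n$ uniformly for $j\ge n^\delta$, this $\to (1-t)$ — i.e.\ $\varphi_\infty(t)=1-t$, matching the uniform density on $[-1,1]$. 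Then $\frac1n\sum_{i=1}^n g(S_i)\approx \int_0^1 g(\varphi_q(t))\,dt$, and a change of variables $x=\varphi_q(t)$ (a decreasing bijection $[0,1]\to[0,\frac1{q-1}]$ with $t=(1-(q-1)x)^{q/(q-1)}$) produces exactly the density $f_{q,1}$ claimed: $dt = -q(1-(q-1)x)^{1/(q-1)}\,dx$, giving $\int_0^{1/(q-1)} g(x)\,q(1-(q-1)x)^{1/(q-1)}\,dx$, which symmetrized is $\int g\,d\nu_{q,1}$.

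To turn these pointwise-in-$t$ statements into the empirical convergence, I would use a standard second-moment argument: write $\IE[\frac1n\sum_i g(S_i)] = \frac1n\sum_i \IE g(S_i) \to \int_0^1 g(\varphi_q(t))\,dt$ (continuity of $g$, boundedness, dominated convergence, plus the concentration of $S_i$ around $a_{n,i}$ for the bulk of indices), and then bound $\Var(\frac1n\sum_i g(S_i))$ by controlling $\Cov(g(S_i),g(S_k))$; here one exploits that $S_i$ and $S_k$ for $i<k$ share the tail $\sum_{j=k}^n$, so their covariance is essentially $\Var(S_k)=o(1)$, giving a variance of order $o(1)$ after the double sum is normalized. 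I expect the main obstacle to be precisely this variance bound — handling the random permutation $\pi$ and the overlapping-sum dependence structure simultaneously, and controlling the small-$i$ (equivalently small-$j$, where $\kappa_q(j)$ is not yet in its asymptotic regime and $\frac{n^{1/q}}{\kappa_q(j)}$ is largest) indices whose contribution to the empirical average must be shown negligible. For $q=\infty$ the slowly varying $\log$ requires a slightly more careful but routine splitting of the index range into $j\le n^\delta$ and $j>n^\delta$. Once the mean converges and the variance vanishes, Chebyshev gives convergence in probability, which is exactly the assertion.
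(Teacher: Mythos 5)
Your proposal is correct and follows essentially the same route as the paper: the stochastic representation reduces the problem to the partial sums $S_i=\frac{1}{n}\sum_{j\ge i} n^{1/q}\kappa_q(j)^{-1}E_j$, which concentrate around the profile $G_q((i-1)/n)$, after which a Riemann sum and the change of variables $x=G_q(t)$ produce the density $f_{q,1}$, with the signs handled by symmetrization and the denominator by the law of large numbers. The only technical difference is that the paper establishes a \emph{uniform-in-$i$} approximation of the order statistics by $G_q$ (via a three-term decomposition and Kolmogorov's maximal inequality) and then invokes uniform continuity of $f$, whereas you use pointwise concentration plus a second-moment bound on the empirical average --- which also works, since Cauchy--Schwarz together with the vanishing variances $\Var(g(S_i))$ already kills the covariances, so the overlapping-tail dependence and the random permutation that you flag as the main obstacles are in fact harmless (the permutation does not even change the empirical sum).
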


Essentially due to exchangeability of the coordinates, it follows from Theorem~\ref{thm:main} that any fixed choice of $k\in\IN$ coordinates is asymptotically distributed as the $k$-fold product of $\nu_{q,1}$ as the dimension of the ambient space tends to infinity. We will obtain the following weak Poincar\'e-Maxwell-Borel principle for normalized Lorentz balls.

\begin{corollary}\label{cor:coordinates}
	Let $1<q\le\infty$ and for each $n\in\IN$ assume that $\widetilde{X}^{(n)}=(\widetilde{X}_1^{(n)},\dots,\widetilde{X}_n^{(n)})$ is uniformly distributed on $\loo$. For every $k\in\IN$ and with $\nu_{q,1}$ as in Theorem~\ref{thm:main}, for any bounded and continuous function $f\colon \IR^k\to\IR$, we have
\[
\IE \big[f(\widetilde{X}_1^{(n)},\dots,\widetilde{X}_k^{(n)}) \big] \stackrel{n\to\infty}{\longrightarrow} \int_{\IR^k}f(x)\, \nu_{q,1}^{\otimes k}(\dd x),
\]
that is, $(\widetilde{X}_1^{(n)},\dots,\widetilde{X}_k^{(n)})$ converges in distribution to a vector $Y^{(k)}=(Y_1,\dots,Y_k)\sim \nu_{q,1}^{\otimes k}$.
\end{corollary}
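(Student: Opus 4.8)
The plan is to deduce Corollary~\ref{cor:coordinates} from Theorem~\ref{thm:main} by exploiting the exchangeability of the coordinates of $\widetilde X^{(n)}$, which follows from the fact that $\loo$ is a $1$-symmetric body and $\widetilde X^{(n)}$ is uniform on it. The standard route is the following. First I would recall that Theorem~\ref{thm:main} gives convergence in probability of the empirical measure $\mu_n := \frac1n\sum_{i=1}^n \delta_{\widetilde X_i^{(n)}}$ to the deterministic measure $\nu_{q,1}$, tested against bounded continuous $f$; since $\nu_{q,1}$ is a fixed probability measure and convergence against all bounded continuous test functions on $\IR$ characterizes weak convergence (using that $\IR$ is separable, so a countable convergence-determining class suffices), this says $\mu_n \to \nu_{q,1}$ weakly in probability. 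Taking expectations and using boundedness of $f$ together with dominated convergence, one also gets $\IE[\mu_n(f)] = \IE[f(\widetilde X_1^{(n)})] \to \nu_{q,1}(f)$ for every bounded continuous $f\colon\IR\to\IR$, which already settles the case $k=1$.

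For general $k$, the key classical fact is de Finetti/Diaconis--Freedman type: for an exchangeable random vector, the joint law of the first $k$ coordinates is close, in a quantitative sense, to the expected $k$-fold product of the empirical measure. Concretely I would use that for an exchangeable vector of length $n$ and any bounded continuous $f\colon\IR^k\to\IR$,
\[
\IE\big[f(\widetilde X_1^{(n)},\dots,\widetilde X_k^{(n)})\big]
= \IE\!\left[\frac{(n-k)!}{n!}\sum_{\substack{i_1,\dots,i_k \\ \text{distinct}}} f(\widetilde X_{i_1}^{(n)},\dots,\widetilde X_{i_k}^{(n)})\right],
\]
and that the right-hand side differs from $\IE\big[\mu_n^{\otimes k}(f)\big] = \IE\big[\int_{\IR^k} f\, \mathrm d\mu_n^{\otimes k}\big]$ by at most $\|f\|_\infty$ times the probability that a uniform ordered $k$-tuple from $\{1,\dots,n\}$ has a repeated index, which is $O(k^2/n)\to 0$. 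So it suffices to prove $\IE\big[\mu_n^{\otimes k}(f)\big] \to \nu_{q,1}^{\otimes k}(f)$. For that, note $\mu_n \to \nu_{q,1}$ weakly in probability implies $\mu_n^{\otimes k} \to \nu_{q,1}^{\otimes k}$ weakly in probability (weak convergence of probability measures is preserved under products, and convergence in probability in the weak topology on $\mathscr{M}_1(\IR^k)$ follows e.g.\ via subsequences); then apply bounded convergence once more, using $|\mu_n^{\otimes k}(f)| \le \|f\|_\infty$, to pass the expectation through. Chaining the two estimates gives $\IE[f(\widetilde X_1^{(n)},\dots,\widetilde X_k^{(n)})] \to \nu_{q,1}^{\otimes k}(f)$, which is the claimed convergence in distribution.

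An alternative, essentially equivalent, argument I could present instead avoids the combinatorial bound: by exchangeability, convergence of the empirical measure in probability to a \emph{deterministic} limit is known to be equivalent to asymptotic independence of any fixed finite set of coordinates with the limiting marginal law --- this is a standard lemma (see e.g.\ the literature on the Poincar\'e--Maxwell--Borel principle). One cheap way to see the implication we need: for $f$ of the product form $f(x_1,\dots,x_k) = \prod_{j=1}^k g_j(x_j)$ with $g_j$ bounded continuous, expand and use that $\mu_n(g_j) \to \nu_{q,1}(g_j)$ in probability for each $j$, so $\prod_j \mu_n(g_j) \to \prod_j \nu_{q,1}(g_j)$ in probability and boundedly; this handles product test functions, and these are dense enough (Stone--Weierstrass on compacts plus a tightness argument) to conclude for all bounded continuous $f$ on $\IR^k$. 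I expect the only mildly delicate point --- the ``main obstacle'' in an otherwise soft argument --- to be the careful justification that weak convergence in probability of $\mu_n$ upgrades to weak convergence in probability of the products $\mu_n^{\otimes k}$ and then survives taking expectations; this is routine but worth stating cleanly, and it is where one uses that the limit $\nu_{q,1}$ is non-random (so that no averaging over a random de Finetti mixing measure is needed).
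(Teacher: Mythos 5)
Your proposal is correct and follows essentially the same route as the paper, which also exploits exchangeability, compares $\IE[f(\widetilde{X}_1^{(n)},\dots,\widetilde{X}_k^{(n)})]$ with the expectation of $f$ against the $k$-fold product of the empirical measure via the $O(k^2/n)$ repeated-index bound, and then invokes Theorem~\ref{thm:main}; the paper simply performs the reduction to product test functions $f=\prod_i f_i$ \emph{first} (citing Dembo--Zeitouni), which is exactly your ``alternative'' argument and avoids having to upgrade weak convergence in probability of $\mu_n$ to that of $\mu_n^{\otimes k}$ for general $f$.
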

\begin{remark}
The case $q=1$ for which $\IB_1^n=\IB_{1,1}^n$ is already known \cite{RR1991} and in this case the asymptotic coordinate distributions are two-sided exponential. By considering $q_N:=1+1/N\to 1$, we see that it is consistent with our result due to limiting behavior
\[
f_{q_N}(x)=\frac{1+1/N}{2}(1-|x|/N)^{N}\mathbbm{1}_{[-N,N]}(x)\stackrel{N\to\infty}{\longrightarrow} \frac{1}{2}\exp(-|x|),\quad x\in\R.
\]
\end{remark}
\begin{remark}
	A notable difference in the asymptotic probabilistic behavior of vectors chosen uniformly at random from the volume normalized versions of $\IB_{q,1}^n$ and $\IB_q^n$ is that in the former case the asymptotic distribution of a single coordinate has compact support whereas in the latter case one obtains unbounded support (see \eqref{eq:schechtman-zinn representation} and the definition of the $q$-Gaussian distribution).
\end{remark}

In contrast to the maximum norm of uniform random vectors in normalized $\ell_q^n$ balls, which has Gumbel fluctuations \cite[Theorem 1.1 (c)]{KPT2019_I}, we have a central limit theorem for the Lorentz balls for $q>2$, i.e., while the volume radius in terms of the dimensions is the same in both cases, the probabilistic behavior is quite different. In between, that is, for $q\in (1,2]$, we have a different type of limit theorem which seems to interpolate between the Gumbel distribution and the normal distribution.

\begin{thmalpha}\label{thm:clt-inf}
Let $1\le q< \infty$, $r\in (0,\infty)$ and for each $n\in\IN$ assume that $\widetilde{X}^{(n)}$ is uniformly distributed on $\loo$. Then we have:
\begin{enumerate}
	\item[(i)] For $1\leq q< 2$
		\[
		n^{1-1/q}\big(\|\widetilde{X}^{(n)}\|_{\infty}-\mu_{q,n}\big)\xrightarrow[n\to\infty]{\ed} R_q
		\overset{\ed}{=}\sum_{j=1}^{\infty}\frac{E_j-1}{\kappa_q(j)},
		\]
	\item[(ii)] For $q=2$
		\[
		\frac{\sqrt{n}}{\sqrt{\log n}}\big(\|\widetilde{X}^{(n)}\|_{\infty}-\mu_{2,n}\big)\xrightarrow[n\to\infty]{\ed} R_2 \sim \mathscr{N}(0,1/4),
		\]
	\item[(iii)] For $2<q<\infty$
\[
\sqrt{n}\big(\|\widetilde{X}^{(n)}\|_{\infty}-\mu_{q,n}\big) \xrightarrow[n\to\infty]{\ed} \mathscr{N}(0,\sigma_q^2).
\]
\end{enumerate}
Here, $R_1+\gamma$ has a Gumbel law with distribution function $x\mapsto e^{-e^{-x}}$, $\gamma$ is the Euler-Mascheroni constant,
\[
\mu_{q,n}:=\frac{1}{n}\sum_{j=1}^{n}\frac{n^{1/q}}{\kappa_q(j)}\qquad \text{and}\qquad \sigma_q^2:=\frac{1}{q(q-1)^2 (q-2)}. 
\]
\end{thmalpha}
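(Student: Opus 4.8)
The plan is to use the stochastic representation from Theorem~\ref{thm:stochastic-rep}, noting that $\|\widetilde X^{(n)}\|_\infty$ corresponds to the largest coordinate, which by construction equals $\frac{1}{n}\sum_{j=1}^n \frac{n^{1/q}}{\kappa_q(j)}E_j \big/ \frac{1}{n}\sum_{j=1}^{n+1}E_j$; indeed, the largest coordinate corresponds to $\pi(i)=1$, which makes the numerator sum run over all indices $j=1,\dots,n$. Writing $S_n := \frac{1}{n}\sum_{j=1}^{n+1}E_j \to 1$ almost surely and setting $T_n := \frac{1}{n}\sum_{j=1}^n \frac{n^{1/q}}{\kappa_q(j)}(E_j-1)$, we have $\|\widetilde X^{(n)}\|_\infty = (\mu_{q,n} + T_n)/S_n$, so $\|\widetilde X^{(n)}\|_\infty - \mu_{q,n} = (T_n - \mu_{q,n}(S_n-1))/S_n$. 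The first task is to understand the asymptotics of $\kappa_q(j) = \sum_{i=1}^j i^{1/q-1}$: by comparison with an integral, $\kappa_q(j) \sim q j^{1/q}$ for $q<\infty$, with the next-order correction (via Euler--Maclaurin) being important in the boundary cases. Consequently $\frac{n^{1/q}}{\kappa_q(j)} \sim \frac{1}{q}(n/j)^{1/q}$, and one computes $\mathrm{Var}(T_n) = \frac{1}{n^2}\sum_{j=1}^n \frac{n^{2/q}}{\kappa_q(j)^2} \sim \frac{1}{q^2 n^2} \sum_{j=1}^n (n/j)^{2/q} = \frac{n^{2/q-2}}{q^2}\sum_{j=1}^n j^{-2/q}$. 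This sum behaves like $\frac{n^{1-2/q}}{1-2/q}$ when $q>2$, like $\log n$ when $q=2$, and converges when $q<2$ — which is precisely the source of the three regimes.

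For part (iii), $q>2$: here $\mathrm{Var}(T_n)\sim \sigma_q^2/n$ with $\sigma_q^2 = \frac{1}{q^2}\cdot\frac{1}{1-2/q} = \frac{q}{q^2(q-2)}$; I should double-check this matches $\frac{1}{q(q-1)^2(q-2)}$, which suggests the correction term in $\kappa_q(j) \sim q j^{1/q} + c_q$ with the right constant $c_q$ contributes at leading order — so one must use $\kappa_q(j) \sim q j^{1/q}(1 + \frac{q-1}{2q} j^{-1/q} \cdot \frac{2}{q-1}\cdots)$ more carefully; in fact Euler--Maclaurin gives $\kappa_q(j) = q j^{1/q} + \frac{1}{2}j^{1/q-1} + \zeta(1-1/q) + O(j^{1/q-2})$ roughly, but the dominant correction relevant to the variance is the constant/fractional term, leading to $\frac{n^{1/q}}{\kappa_q(j)} = \frac{1}{q}(n/j)^{1/q}(1 - \frac{1}{2qj} - \dots)$ — the $(q-1)$ in the denominator of $\sigma_q^2$ strongly hints that the relevant expansion is $\kappa_q(j) = q j^{1/q} - \frac{q}{q-1} + o(1)$ or similar, and getting this constant right is where care is needed. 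Once the correct expansion yields $\mathrm{Var}(\sqrt n\, T_n) \to \sigma_q^2$, apply the Lindeberg--Feller CLT to the triangular array of independent summands $\frac{1}{\sqrt n}\cdot\frac{n^{1/q}}{\kappa_q(j)}(E_j-1)$; the Lindeberg condition is easy since the coefficients $\frac{1}{\sqrt n}\frac{n^{1/q}}{\kappa_q(j)}$ are uniformly small (the largest, at $j=1$, is $O(n^{1/q-1/2}) \to 0$ since $q>2$). The term $\mu_{q,n}(S_n-1) = \mu_{q,n}\cdot\frac{1}{n}\sum_{j=1}^{n+1}(E_j-1)$ has standard deviation of order $\mu_{q,n}/\sqrt n \sim q n^{1/q}/\sqrt n$; for $q>2$ this is $o(1/\sqrt n)\cdot$(something) — wait, $\mu_{q,n} = \frac{1}{n}\sum \frac{n^{1/q}}{\kappa_q(j)} \sim \frac{1}{n}\cdot\frac{1}{q}n^{1/q}\sum j^{-1/q} \sim \frac{1}{n}\cdot\frac{1}{q}n^{1/q}\cdot\frac{n^{1-1/q}}{1-1/q} = \frac{1}{q-1}$, a constant, so $\mu_{q,n}(S_n-1)$ contributes an independent $\mathscr N(0, \mu_{q,n}^2/n)$ term with $\mu_{q,n}^2/n \sim \frac{1}{(q-1)^2 n}$; this must be \emph{added} to the variance, and indeed $\frac{q}{q^2(q-2)} + \frac{1}{(q-1)^2}$ should equal $\frac{1}{q(q-1)^2(q-2)}$ — checking: common denominator $q(q-1)^2(q-2)$ gives $\frac{(q-1)^2 + q^2(q-2)}{q(q-1)^2(q-2)}\cdot\frac{1}{q}$... this doesn't immediately match, so I will need to be careful about cross-covariance: $T_n$ and $S_n-1$ are \emph{not} independent (they share $E_1,\dots,E_n$), so the variance of $T_n - \mu_{q,n}(S_n-1)$ involves a covariance term, and the final $\sigma_q^2$ emerges from the full computation $\mathrm{Var}(T_n) - 2\mu_{q,n}\mathrm{Cov}(T_n,S_n-1) + \mu_{q,n}^2\mathrm{Var}(S_n-1)$. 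Finally, Slutsky's theorem handles division by $S_n \to 1$.

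For part (ii), $q=2$: $\mathrm{Var}(T_n) \sim \frac{1}{4n}\cdot\frac{1}{n}\sum_{j=1}^n \frac{1}{j}\cdot n \sim \frac{\log n}{4n}$ — more precisely $\frac{n^{2/q-2}}{q^2}\sum j^{-2/q} = \frac{n^{-1}}{4}\sum_{j=1}^n j^{-1} \sim \frac{\log n}{4n}$, so $\frac{\sqrt n}{\sqrt{\log n}}T_n$ has variance $\to 1/4$; the correction terms in $\kappa_q$ and the $\mu_{q,n}(S_n-1)$ term are now lower order (the latter contributes variance $O(1/n) = o(\log n/n)$), so only the leading term survives and we get $\mathscr N(0,1/4)$. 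Apply Lindeberg--Feller again; the Lindeberg condition holds because the normalized coefficient $\frac{\sqrt n}{\sqrt{\log n}}\cdot\frac{1}{n}\cdot\frac{n^{1/2}}{\kappa_2(j)} = \frac{1}{\sqrt{\log n}}\cdot\frac{1}{\sqrt n\,\kappa_2(j)/n^{1/2}}$... the largest term is at $j=1$ where $\kappa_2(1)=1$, giving coefficient $\sim \frac{1}{\sqrt{n\log n}} \to 0$, so the array is uniformly asymptotically negligible. For part (i), $1\le q<2$: now $\sum_{j=1}^\infty \kappa_q(j)^{-2} < \infty$ (since $\kappa_q(j)\sim q j^{1/q}$ and $2/q>1$), so $T_n' := \sum_{j=1}^n \kappa_q(j)^{-1}(E_j-1)$ converges almost surely (and in $L^2$) to $R_q = \sum_{j=1}^\infty \frac{E_j-1}{\kappa_q(j)}$ by Kolmogorov's three-series / martingale convergence. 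The scaling $n^{1-1/q}(\|\widetilde X^{(n)}\|_\infty - \mu_{q,n})$: note $n^{1-1/q}T_n = n^{1-1/q}\cdot\frac{n^{1/q}}{n}\sum_{j=1}^n \frac{E_j-1}{\kappa_q(j)} = \sum_{j=1}^n \frac{E_j-1}{\kappa_q(j)} = T_n' \to R_q$; meanwhile $n^{1-1/q}\mu_{q,n}(S_n-1) = n^{1-1/q}\cdot\mu_{q,n}\cdot\frac{1}{n}\sum_{j=1}^{n+1}(E_j-1)$, and since $\mu_{q,n}$ is bounded (it converges to $\frac{1}{q-1}$ for $q>1$, and for $q=1$, $\mu_{1,n} = \frac{1}{n}\sum_{j=1}^n \frac{n}{\kappa_1(j)} = \frac{1}{n}\sum \frac{n}{j} = \sum_{j=1}^n \frac{1}{j} \sim \log n$), this term is $O(n^{1-1/q}\mu_{q,n}/\sqrt n \cdot \text{(bounded)})$; for $q=1$ one needs $n^{1-1/q}=1$ and $\mu_{1,n}\sim\log n$ against $S_n-1 = O_P(1/\sqrt n)$, so the product is $O_P(\log n/\sqrt n) \to 0$; for $1<q<2$, $n^{1-1/q}\mu_{q,n}/\sqrt n = O(n^{1/2-1/q}) \to 0$ since $q<2$. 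So this term vanishes and $n^{1-1/q}(\|\widetilde X^{(n)}\|_\infty - \mu_{q,n}) = (T_n' + o_P(1))/S_n \to R_q$ by Slutsky. The identification of $R_1 + \gamma$ as Gumbel: for $q=1$, $\kappa_1(j) = j$, so $R_1 = \sum_{j=1}^\infty \frac{E_j - 1}{j}$; by Remark~\ref{rem:one} the largest order statistic of $n$ i.i.d.\ exponentials is $\sum_{j=1}^n E_j/j$, which minus $\log n$ converges to Gumbel, and $\sum_{j=1}^n 1/j - \log n \to \gamma$, giving $R_1 = \lim(\sum_{j=1}^n E_j/j - \log n) - \gamma + \gamma$... so $R_1 + \gamma = \lim(\sum E_j/j - H_n) + \gamma = \lim(\sum E_j/j - \log n)$, which is the standard Gumbel representation.

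The main obstacle will be pinning down the precise second-order asymptotics of $\kappa_q(j)$ via Euler--Maclaurin, and correctly assembling the three contributions (the variance of $T_n$, the variance of the denominator fluctuation, and their covariance) to recover exactly $\sigma_q^2 = \frac{1}{q(q-1)^2(q-2)}$ in case (iii) — the appearance of the factor $(q-1)^2$ is a clear signal that the $\mu_{q,n}(S_n-1)$ term and its covariance with $T_n$ contribute at leading order and cannot be discarded as in cases (i) and (ii). A secondary technical point is verifying the Lindeberg condition uniformly, but since all the coefficient arrays are easily seen to be uniformly asymptotically negligible (the dominant $j=1$ term vanishes in the appropriate normalization in every regime), this should be routine.
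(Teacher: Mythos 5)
Your proposal follows essentially the same route as the paper: the same stochastic representation identifying $\|\widetilde X^{(n)}\|_\infty$ with the ratio $Y_n/Z_n$, martingale convergence of $\sum_{j\le n}(E_j-1)/\kappa_q(j)$ for $q<2$ together with the Gumbel identification at $q=1$, a Lindeberg CLT for $q=2$, and for $q>2$ a CLT for the joint fluctuations of numerator and denominator whose cross-covariance produces the extra factor involving $(q-1)^{-2}$ (the paper phrases this as a bivariate CLT plus the delta method for $(x,y)\mapsto x/y$, which is the same linearization you write down). Your digression about second-order Euler--Maclaurin corrections to $\kappa_q(j)$ is a red herring --- the bound $|\kappa_q(j)-qj^{1/q}|=O(1)$ suffices in every regime --- and the resolution you ultimately reach is the correct one: $n\big(\Var(T_n)-2\mu_{q,n}\Cov(T_n,S_n-1)+\mu_{q,n}^2\Var(S_n-1)\big)\to \frac{1}{q(q-2)}-\frac{2}{(q-1)^2}+\frac{1}{(q-1)^2}=\frac{1}{q(q-1)^2(q-2)}=\sigma_q^2$.
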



We can also use the probabilistic representation to study the $\ell_r^n$ norm of a random vector uniformly distributed in $\ID_{q,1}^n:=\vol_n(\mathbb{B}_{q,1}^n)^{-1/n}\mathbb{B}_{q,1}^n$, i.e., we determine its asymptotic length with respect to $\|\cdot\|_r$; we shall apply that result later to study the volumetric behavior of certain intersections. We have the following weak law of large numbers.

\begin{thmalpha}\label{pro:lln-norm}
Let $1<q\le \infty$, $1<r\le\infty$, and for each $n\in\IN$ assume that $X^{(n)}$ is uniformly distributed on $\ID_{q,1}^n$. Then
\[
n^{-1/r}\|X^{(n)}\|_r \xrightarrow[n\to\infty]{\mathbb P}  m_{q,r},
\]
where for $r<\infty$ we have
\[
m_{q,r}:=\frac{1}{2e^{1/q}}\frac{q}{q-1}\left(\frac{\Gamma(r+1)\Gamma\Big(1+\frac{q}{q-1}\Big)}{\Gamma\Big(r+1+\frac{q}{q-1}\Big)}\right)^{1/r} \text{ if } q<\infty \quad\text{and}\quad m_{\infty,r}:=\frac{1}{2} \Big(\frac{1}{r+1}\Big)^{1/r} \text{ if } q=\infty,
\]
and for $r=\infty$ we have
\[
m_{q,\infty}:=\frac{1}{2e^{1/q}}\frac{q}{q-1}\text{ if } q<\infty \quad\text{and}\quad m_{\infty,\infty}:= \frac{1}{2}.
\]
\end{thmalpha}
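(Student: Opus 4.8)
The plan is to use the stochastic representation from Theorem~\ref{thm:stochastic-rep} together with the volume asymptotics from Corollary~\ref{cor:volume}. Since $\ID_{q,1}^n = \vol_n(\IB_{q,1}^n)^{-1/n}\IB_{q,1}^n$ and $\loo = n^{1/q}\IB_{q,1}^n$, for $q<\infty$ we have $\ID_{q,1}^n = c_n \loo$ with $c_n = n^{1/q}\vol_n(\IB_{q,1}^n)^{-1/n} \to \frac{q}{2}e^{-1/q}$ by Corollary~\ref{cor:volume}; analogously for $q=\infty$ one gets $c_n\to 1/2$. Thus it suffices to prove the weak law for $\widetilde X^{(n)}$ uniform on $\loo$ with the constant $\frac{q}{2}e^{-1/q}$ (resp.\ $1/2$) absorbed at the end, and by Slutsky it is enough to show $n^{-1/r}\|\widetilde X^{(n)}\|_r \to \frac{2e^{1/q}}{q}\, m_{q,r}$ for $r<\infty$ and the analogous statement for $r=\infty$.

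For the case $1<r<\infty$ and $q<\infty$, I would write, using representation~\eqref{eq:stochastic-rep} coordinatewise,
\[
n^{-1/r}\|\widetilde X^{(n)}\|_r
= \left(\frac{1}{n}\sum_{i=1}^n \big|\widetilde X_i^{(n)}\big|^r\right)^{1/r}
= \frac{1}{\frac{1}{n}\sum_{j=1}^{n+1}E_j}\left(\frac{1}{n}\sum_{i=1}^n \Big|\frac{1}{n}\sum_{j=\pi(i)}^n \frac{n^{1/q}}{\kappa_q(j)}E_j\Big|^r\right)^{1/r}.
\]
The denominator tends to $1$ a.s.\ by the law of large numbers. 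For the numerator, note that as $i$ ranges over $\{1,\dots,n\}$ and $\pi$ is a uniform permutation, the quantities $\frac{1}{n}\sum_{j=\pi(i)}^n \frac{n^{1/q}}{\kappa_q(j)}E_j$ are, up to reindexing, exactly $\frac{1}{n}\sum_{j=k}^n \frac{n^{1/q}}{\kappa_q(j)}E_j$ for $k=1,\dots,n$, so
\[
\frac{1}{n}\sum_{i=1}^n \Big|\frac{1}{n}\sum_{j=\pi(i)}^n \frac{n^{1/q}}{\kappa_q(j)}E_j\Big|^r
= \frac{1}{n}\sum_{k=1}^n \Big(\frac{1}{n}\sum_{j=k}^n \frac{n^{1/q}}{\kappa_q(j)}E_j\Big)^r.
\]
Here I would use the elementary asymptotics $\kappa_q(j)=\sum_{i=1}^j i^{1/q-1}\sim q\, j^{1/q}$ (for $q>1$; and $\kappa_\infty(j)\sim\log j$), so that $\frac{n^{1/q}}{\kappa_q(j)}\sim \frac{1}{q}(n/j)^{1/q}$. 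Writing $k = \lceil tn\rceil$ for $t\in(0,1]$, the inner sum $\frac{1}{n}\sum_{j=k}^n \frac{n^{1/q}}{\kappa_q(j)}E_j$ should concentrate (by a law of large numbers / Riemann sum argument, since the $E_j$ are i.i.d.\ with mean $1$ and the weights are bounded on $j\ge k$ when $t$ is bounded away from $0$) around $\int_t^1 \frac{1}{q}s^{-1/q}\,\dd s = \frac{1}{q-1}(t^{1-1/q}-1)\cdot\frac{q}{?}$; more precisely around $g_q(t):=\int_t^1 \frac{1}{q}s^{-1/q}\,\dd s$. Hence the Cesàro average over $k$ converges to $\int_0^1 g_q(t)^r\,\dd t$, and the whole expression converges in probability to $\big(\int_0^1 g_q(t)^r\,\dd t\big)^{1/r}$. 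The final step is then to evaluate $\int_0^1 g_q(t)^r\,\dd t$ in closed form; after the substitution and a Beta-integral computation this should reproduce $\big(\frac{2e^{1/q}}{q}m_{q,r}\big)^r$, i.e.\ $\big(\frac{q}{q-1}\big)^r \frac{\Gamma(r+1)\Gamma(1+\frac{q}{q-1})}{\Gamma(r+1+\frac{q}{q-1})}$, after multiplying by $c_n^r\to(\frac{q}{2}e^{-1/q})^r$. The case $q=\infty$ is analogous with $g_\infty(t)=\int_t^1\frac{1}{-\log s}\cdots$; in fact one expects $g_\infty(t)=1-t$ after the appropriate normalization, giving $\int_0^1(1-t)^r\,\dd t = \frac{1}{r+1}$ and hence $m_{\infty,r}$. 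For $r=\infty$ one uses $\|\widetilde X^{(n)}\|_\infty$, which is just the first coordinate obtained from $k=1$ (the largest), and Theorem~\ref{thm:clt-inf} (or rather its law-of-large-numbers content: $\mu_{q,n}\to \frac{q}{q-1}$ resp.\ $1$) identifies the limit; alternatively $\lim_{r\to\infty}m_{q,r}=m_{q,\infty}$ by monotone/dominated convergence and a uniformity argument.

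The main obstacle I anticipate is making the "concentration of the inner sum uniformly in $k$, then Cesàro-average" argument rigorous, since for $k$ close to $n$ the sum has few terms (so no concentration in the usual LLN sense) and for $k$ close to $1$ the weight $\frac{n^{1/q}}{\kappa_q(j)}$ is large for small $j$; one has to check that these boundary regions contribute negligibly to the $L^1$-average over $k$, which requires an integrability estimate on $g_q(t)^r$ near $t=0$ (finite since $g_q$ is bounded) and a careful second-moment or truncation bound controlling the fluctuation $\frac{1}{n}\sum_{j=k}^n\frac{n^{1/q}}{\kappa_q(j)}(E_j-1)$ uniformly enough. A clean way around this is to work directly with the full double sum: set $T_n := \frac{1}{n}\sum_{k=1}^n\big(\frac{1}{n}\sum_{j=k}^n w_n(j)E_j\big)^r$ with $w_n(j):=n^{1/q}/\kappa_q(j)$, compute $\IE[T_n]$ via exchanging sums and the explicit moments of sums of independent exponentials (using $\IE[(\sum a_j E_j)^r]$ expressed through the $a_j$), show $\IE[T_n]\to\int_0^1 g_q(t)^r\,\dd t$ by dominated convergence, and show $\Var(T_n)\to 0$ by a similar but lengthier moment computation; then $T_n\to\int_0^1 g_q(t)^r\,\dd t$ in $L^2$ hence in probability, and combine with the denominator via Slutsky. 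The second-moment computation is the routine-but-tedious part. For non-integer $r$ one replaces exact moment formulas by the bound $\IE[(\sum a_j E_j)^r]\le (\sum a_j)^r$ for $r\le 1$ and convexity/Rosenthal-type bounds for $r>1$ to get the needed domination, while the limit identification still goes through the Riemann-sum heuristic made rigorous by a LLN along subsequences of $k/n$.
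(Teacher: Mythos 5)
Your proposal follows essentially the same route as the paper: reduce to the order statistics via the stochastic representation, show that $(\widetilde X_i^{(n)})^*$ is uniformly close to $G_q\bigl(\tfrac{i-1}{n}\bigr)$ with $G_q(t)=\tfrac{1}{q-1}(1-t^{1-1/q})$, pass to the Riemann sum $\int_0^1 G_q(t)^r\,\dd t$, evaluate it as a Beta integral, and rescale by the volume radius from Corollary~\ref{cor:volume}. The one step you leave as a sketch --- concentration of $\tfrac1n\sum_{j=k}^n \tfrac{n^{1/q}}{\kappa_q(j)}E_j$ around $G_q\bigl(\tfrac{k-1}{n}\bigr)$ \emph{uniformly in} $k$ --- is exactly the content of the paper's Lemma~\ref{lem:uniform-app}, which is proved by splitting off the centred random part $\tfrac1n\sum_{j=k}^n\tfrac{n^{1/q}}{\kappa_q(j)}(E_j-1)$ and controlling its supremum over $k$ by Kolmogorov's maximal inequality, together with deterministic bounds for the $\kappa_q$-approximation and sum--integral errors; this disposes of your worry about the boundary regions in $k$ and is cleaner than the $\IE[T_n]$/$\Var(T_n)$ computation you propose as a fallback. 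Two small corrections: $\bigl(\tfrac{2e^{1/q}}{q}m_{q,r}\bigr)^r$ equals $\bigl(\tfrac{1}{q-1}\bigr)^r\tfrac{\Gamma(r+1)\Gamma(1+\frac{q}{q-1})}{\Gamma(r+1+\frac{q}{q-1})}$, not $\bigl(\tfrac{q}{q-1}\bigr)^r$ times that ratio --- and the integral $\int_0^1 G_q(t)^r\,\dd t$ does indeed produce the former --- while the observation $\lim_{r\to\infty}m_{q,r}=m_{q,\infty}$ by itself does not establish the $r=\infty$ case, though your primary argument via the largest order statistic does.
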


As a consequence we can deduce the following Schechtman-Schmuckenschl\"ager type result on the asymptotic volume of intersections of Lorentz and $\ell_r^n$ balls, thereby complementing the results from \cite{JKP2022,JP2022,KP2021,KPT2019_I,KPT2020_matrix,SS1991,SZ1990,S2001}. In the following corollary, we shall denote $\ID_r^n:=\vol_n(\mathbb{B}_r^n)^{-1/n}\mathbb{B}_r^n$.

\begin{corollary}\label{pro:intersect}
Let $1<q\le \infty$  and $1<r\le\infty$. Then, for all $t>0$,
\[
\vol_n(\ID_{q,1}^n\cap t\ID_r^n)\stackrel{n\to\infty}{\longrightarrow}
\begin{cases}
	1 & \colon A_{q,r}t>1\\
	0 & \colon A_{q,r}t<1,\\
\end{cases}
\]
where for $r<\infty$, we have
\[
A_{q,r}:=
\begin{cases}
	\frac{e^{1/q-1/r}\frac{q-1}{q}}{\Gamma(1+1/r)r^{1/r}}\left(\frac{\Gamma(r+1+\frac{q}{q-1})}{\Gamma(r+1)\Gamma(1+\frac{q}{q-1})}\right)^{1/r}& \colon q<\infty\\
\frac{1}{\Gamma(1+1/r)}\left(\frac{r+1}{re}\right)^{1/r}& \colon q=\infty,\\
\end{cases}
\]
and for $r=\infty$ we have
\[
A_{q,\infty}
:=
\begin{cases}
e^{1/q}\frac{q-1}{q}& \colon q<\infty\\
1& \colon q=\infty.\\
\end{cases}
\]
\end{corollary}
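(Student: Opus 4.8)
The plan is to reduce the statement to the weak law of large numbers in Theorem~\ref{pro:lln-norm} by means of the classical Schechtman--Schmuckenschl\"ager device. Since $\vol_n(\ID_{q,1}^n)=1$, the quantity $\vol_n(\ID_{q,1}^n\cap t\ID_r^n)$ is precisely the probability that a random vector $X^{(n)}$ distributed uniformly on $\ID_{q,1}^n$ belongs to $t\ID_r^n$. Unravelling the normalisation $\ID_r^n=\vol_n(\mathbb{B}_r^n)^{-1/n}\mathbb{B}_r^n$, the inclusion $X^{(n)}\in t\ID_r^n$ is equivalent to $\|X^{(n)}\|_r\le t\,\vol_n(\mathbb{B}_r^n)^{-1/n}$, so that
\[
\vol_n(\ID_{q,1}^n\cap t\ID_r^n)=\IP\Big(n^{-1/r}\|X^{(n)}\|_r\le t\, n^{-1/r}\vol_n(\mathbb{B}_r^n)^{-1/n}\Big).
\]

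Next I would pin down the deterministic factor on the right. For $r<\infty$ one uses the classical volume formula $\vol_n(\mathbb{B}_r^n)=(2\Gamma(1+1/r))^n/\Gamma(1+n/r)$ together with Stirling's approximation $\Gamma(1+n/r)^{1/n}\sim (n/(re))^{1/r}$ (the subexponential correction factors disappear after extraction of the $n$-th root) to get
\[
n^{-1/r}\vol_n(\mathbb{B}_r^n)^{-1/n}\ \stackrel{n\to\infty}{\longrightarrow}\ \frac{1}{2\Gamma(1+1/r)\,(re)^{1/r}}=:c_r,
\]
whereas for $r=\infty$ the left-hand side is the constant $\vol_n(\mathbb{B}_\infty^n)^{-1/n}=1/2=:c_\infty$, the natural boundary value of $c_r$ as $r\to\infty$. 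Now Theorem~\ref{pro:lln-norm} applies under the hypotheses $1<q\le\infty$, $1<r\le\infty$ and gives $n^{-1/r}\|X^{(n)}\|_r\to m_{q,r}$ in probability. Since the threshold is deterministic and converges to $c_r$, the elementary fact that $\IP(Z_n\le a_n)\to 1$ whenever $Z_n\to c$ in probability and $a_n\to a>c$ (and $\IP(Z_n\le a_n)\to 0$ when $a<c$) yields
\[
\vol_n(\ID_{q,1}^n\cap t\ID_r^n)\ \stackrel{n\to\infty}{\longrightarrow}\ \begin{cases} 1 & \colon\ t\,c_r> m_{q,r},\\ 0 & \colon\ t\,c_r< m_{q,r}. \end{cases}
\]

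It remains to insert the explicit value of $m_{q,r}$ from Theorem~\ref{pro:lln-norm} and to verify, in each of the four boundary combinations ($q<\infty$ or $q=\infty$, $r<\infty$ or $r=\infty$), the purely algebraic identity $A_{q,r}=c_r/m_{q,r}$, which is a routine rearrangement of Gamma-function factors; for instance, when $q,r<\infty$ one checks that $c_r/m_{q,r}=\frac{e^{1/q-1/r}\frac{q-1}{q}}{\Gamma(1+1/r)r^{1/r}}\Bigl(\frac{\Gamma(r+1+q/(q-1))}{\Gamma(r+1)\Gamma(1+q/(q-1))}\Bigr)^{1/r}$, which is exactly the stated constant, and the other three cases are analogous. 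Consistently with the statement, nothing is asserted at the critical value $A_{q,r}t=1$.

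The whole argument is a soft deduction from Theorem~\ref{pro:lln-norm} and the volume asymptotics of $\mathbb{B}_r^n$, so I do not expect a genuine obstacle; the only points requiring a little care are controlling the Stirling asymptotics of $\vol_n(\mathbb{B}_r^n)^{1/n}$ (making sure the polynomial factors do not survive the $n$-th root) and the bookkeeping of the $r=\infty$ and $q=\infty$ boundary cases, where all the constants degenerate in the anticipated way.
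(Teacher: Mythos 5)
Your argument is correct and coincides with the paper's own proof: both rewrite the intersection volume as $\IP\big(n^{-1/r}\|X^{(n)}\|_r\le t\,n^{-1/r}\vol_n(\IB_r^n)^{-1/n}\big)$, identify the limit $c_r$ of the deterministic threshold via Dirichlet's volume formula and Stirling, and apply the weak law of large numbers of Theorem~\ref{pro:lln-norm} to conclude with $A_{q,r}=c_r/m_{q,r}$. No issues.
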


\begin{remark}
Comparing with analogous results for the $\ell_p^n$ balls obtained in \cite{SS1991}, we see that for $q=\infty$ and $r<\infty$ the obtained threshold is the same. Therefore, asymptotically $\ID_{\infty,1}^n$ behaves somewhat like $\ID_{\infty}^n$ when intersected with an $\ell_r^n$ ball.

Also for $r<\infty$, using $\Gamma(x+\alpha)\sim \Gamma(x)x^{\alpha}$ for $x\to\infty$, we obtain that
\[
\lim_{q\to 1}A_{q,r}=\frac{e^{1-1/r}}{\Gamma(1+1/r)\Gamma(r+1)^{1/r}r^{1/r}},
\]
which equals the threshold one would get for $\ID_{1}^n$ intersected with $\ID_r^n$. That is, in the boundary cases $q\in \{1,\infty\}$ we have that $\ID_{q,1}^n$ behaves similarly to $\ID_q^n$. In between, however, the behaviour of the threshold constants appears to be different, for example as $q\to\infty$ the constant $A_{q,r}$ grows for $\ID_q^n$ but converges for $\ID_{q,1}^n$.
\end{remark}


For parameters $p>1$, we currently do not have a probabilistic representation. However, we can use heuristic arguments based on maximum entropy considerations in order to derive the following conjecture about the limiting distribution.

\begin{conjecture}\label{conj:allp}
	Let $1\le p \le q < \infty$ and for each $n\in\IN$ assume that $\widetilde{X}^{(n)}=(\widetilde{X}^{(n)}_1,\dots,\widetilde{X}^{(n)}_n)$ is uniformly distributed on $\lo$. Then, for every bounded continuous function $f\colon \IR\to\IR$, we have
\[
\frac{1}{n}\sum_{i=1}^{n}f(\widetilde{X}_{i}^{(n)})\xrightarrow[n\to\infty]{\IP} \int_{\IR} f(x)\, \nu_{q,p}(\dd x),
\]
where $\nu_{q,p}$ is a symmetric probability measure on $\IR$ absolutely continuous with respect to Lebesgue measure and with density function $f_{q,p}\colon \IR\to [0,\infty)$ satisfying $f_{q,p}(x)=\frac{1}{2}G'(|x|)$, where $G\colon [0,\infty)\to\IR$ satisfies $G(0)=0$ and is the unique solution to the differential equation
\begin{align*} \label{eq:ode}
	G''(x)&=-G'(x)\big(1-G(x)\big)^{p/q-1}x^{p-1},\quad x\in (0,r_{p,q}),
\end{align*}
with $G(0)=0$ and $\lim_{x\uparrow r_{p,q}}G'(x)=0$, where $r_{p,q}\in (0,\infty]$ is the first point such that $G(r_{p,q})=1$. Further, we conjecture that $r_{p,q}=\infty$ if and only if $p=q$.
\end{conjecture}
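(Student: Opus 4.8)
The plan is to derive the conjectured ODE heuristically via a maximum-entropy (Gibbs variational) argument analogous to the one Kabluchko--Prochno used for Orlicz balls, and then to analyze the resulting boundary-value problem to extract the claimed dichotomy $r_{p,q}=\infty \iff p=q$. First I would set up the empirical measure heuristic: write the defining constraint for $\lo$ as $\frac1n\sum_{i=1}^n (i/n)^{p/q-1}(x_i^*)^p\le 1$ and observe that, after sorting, the $i$-th largest coordinate occupies ``quantile level'' $t=i/n\in(0,1)$, so that if the empirical distribution of $|\widetilde X_i^{(n)}|$ has (complementary) distribution function $t\mapsto G(x)$ with $t = 1-G(x)$ describing the fraction of coordinates exceeding $x$, the constraint becomes, in the limit, $\int_0^1 (1-G^{-1}\text{-level})^{p/q-1}(\cdots)^p\,\dd t\le 1$; more precisely, parametrizing by the value $x$ of the coordinate, the sorted index fraction is $1-G(x)$ and the Lorentz weight is $(1-G(x))^{p/q-1}$, giving the isoperimetric-type constraint $\int_0^\infty (1-G(x))^{p/q-1} x^p\,(-\dd(1-G(x))) = \int_0^\infty (1-G(x))^{p/q-1} x^p G'(x)\,\dd x \le 1$ (up to the factor accounting for the two-sidedness and the $\frac1p$ versus the normalization $1$, which only rescales $r_{p,q}$).

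Next I would maximize entropy. Among all symmetric densities $f$ on $\R$ with $f(x)=\frac12 G'(|x|)$, we want to maximize the differential entropy $-\int f\log f$ (equivalently the relevant large-deviation rate functional whose unique maximizer governs the weak law, exactly as in Theorem~\ref{thm:main}), subject to the single scalar constraint above. Introducing a Lagrange multiplier $\lambda>0$ for the Lorentz constraint, the Euler--Lagrange equation for the profile $G$ reads (after differentiating the pointwise stationarity condition in $x$ and using that the ``cost'' of placing mass at level $x$ involves both the density value $G'(x)$ and the cumulative weight $(1-G(x))^{p/q-1}$): $\frac{\dd}{\dd x}\log G'(x) = -\lambda\,(1-G(x))^{p/q-1}x^{p-1}$, i.e. $G''(x) = -G'(x)(1-G(x))^{p/q-1}x^{p-1}$, which is exactly the stated ODE after absorbing $\lambda$ by rescaling $x$ (this fixes the normalization so that the first zero of $G'$ coincides with $G=1$, matching the boundary conditions $G(0)=0$, $G(r_{p,q})=1$, $G'(r_{p,q})=0$; strictly, one checks $G'>0$ on $(0,r_{p,q})$ so $G$ is increasing and $G(0)=0$ is the natural initial value while $G'(0)$ is the free parameter tuned so the two terminal conditions are met simultaneously). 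The case $p=q$ collapses the factor $(1-G(x))^{p/q-1}$ to $1$, the ODE becomes $G''=-G' x^{p-1}$, which integrates to $G'(x)=G'(0)e^{-x^p/p}$, so $G(x)=G'(0)\int_0^x e^{-s^p/p}\dd s$ never reaches $1$ in finite $x$ unless we take $r=\infty$ — recovering $f_{q,q}=f_p$, the $p$-Gaussian, consistent with Schechtman--Zinn; this is the ``only if'' direction.

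For the ``if'' direction ($p<q\Rightarrow r_{p,q}<\infty$), I would argue by a comparison/energy estimate on the ODE. Rewrite it as $(\log G')' = -(1-G)^{p/q-1}x^{p-1}$; since $p/q-1<0$ and $G\uparrow 1$, the factor $(1-G)^{p/q-1}$ \emph{blows up} as $G\to1$, so once $G$ gets close to $1$ the derivative $\log G'$ decreases superlinearly fast, forcing $G'\to 0$; the delicate point is to show this happens at a \emph{finite} $x$ and that simultaneously $G$ actually reaches $1$ (rather than asymptoting to some $c<1$). I would do this by a phase-plane analysis in the variables $(G, G'/\text{something})$ or by multiplying through and integrating: from $G''\,(1-G)^{1-p/q} = -G' x^{p-1}$ one gets, integrating the left side by parts against a suitable factor, a first integral relating $\int_0^{r}(1-G)^{1-p/q}\dd(G')$ to $\int_0^r x^{p-1}G'\dd x$, and one bounds the latter by the Lorentz constraint (which is finite) to conclude that $G'$ must vanish by a finite $r$; separately one shows $1-G(r)$ cannot be bounded below away from $0$ because otherwise $(\log G')'$ would be bounded and $G'$ would stay bounded below, making $G$ exceed $1$. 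I expect the main obstacle to be precisely this finiteness-of-$r_{p,q}$ analysis when $p<q$: one must rule out the scenario where $G'$ decays to $0$ only as $x\to\infty$ while $G$ limits to a value $<1$, and this requires carefully balancing the singularity of $(1-G)^{p/q-1}$ against the polynomial growth $x^{p-1}$ — the rest (the variational derivation of the ODE, the uniqueness of the solution by standard ODE theory with the monotone nonlinearity, and the $p=q$ computation) is comparatively routine.
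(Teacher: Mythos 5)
Your derivation follows essentially the same route as the paper's own (necessarily non-rigorous) justification of this statement, which is a conjecture supported only by a maximum-entropy/Gibbs-conditioning heuristic: the Lorentz constraint is rewritten as an integral functional of the limiting profile and the Euler--Lagrange equation of the entropy-maximization problem yields the stated ODE (the paper works with the quantile function $Q=G^{-1}$ and substitutes back, and pins down the multiplier as $c=-1$ by integration by parts against the active constraint, whereas you work directly with $G$ and rescale; these are equivalent). Like the paper, you verify the $p=q$ case recovers the $p$-Gaussian and leave the weak law itself and the finiteness of $r_{p,q}$ for $p<q$ unproved, which is consistent with the conjectural status of the statement.
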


Varying the free parameter $G'(0)>0$, we obtain different solutions, see Figure~\ref{fig:ode} for a simulation. If $G'(0)$ is smaller than some critical value $c_{p,q}$, then we conjecture that $\lim_{x\to\infty}G(x)<1$. If $G'(0)=c_{p,q}$, then there is a (minimal) value $r_{p,q}$ such that $G(r_{p,q})=1$ and we have $\lim_{x\uparrow r_{p,q}}G'(x)=0$. This critical solution is the one that appears in the above conjecture.


\begin{figure}[h]
	\begin{center}
		\includegraphics[width=.75\textwidth]{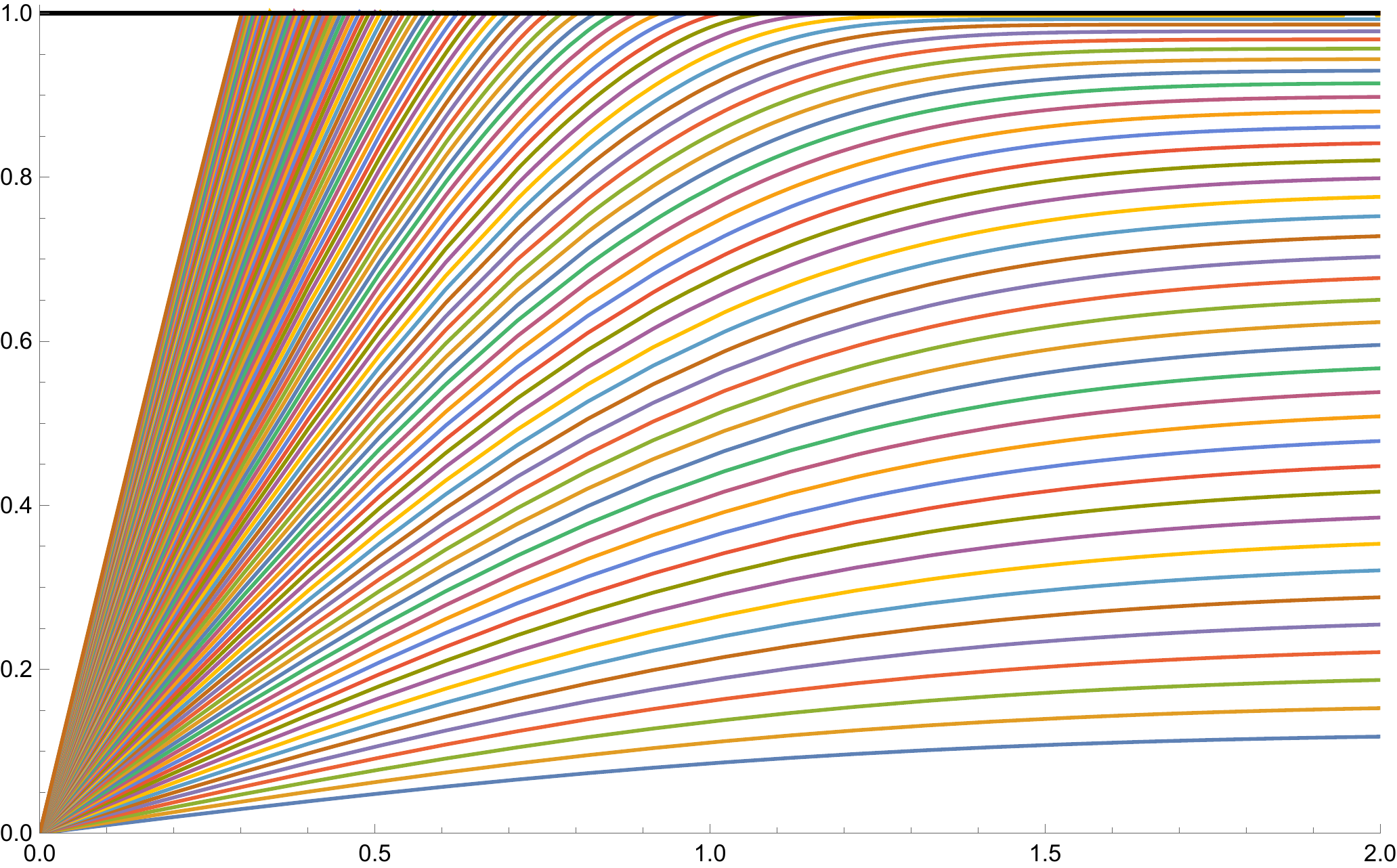}
	\end{center}
	\caption{Simulation of solutions of the differential equation in Conjecture~\ref{conj:allp} with different values of $G'(0)$.}
	\label{fig:ode}
\end{figure}

\begin{remark}
	The conjecture encapsulates the case $p=q$ for $\ell_p^n$ balls, where it correctly returns $p$-Gaussian densities, and the case $p=1$, where it gives the limiting distribution obtained in Theorem~\ref{thm:main}.
\end{remark}

\section{Proofs of the main results}

We shall now present the proofs of our main results presented in Section \ref{sec:main results}. In what follows, given a set $A\subseteq \IR^n$, we shall denote by ${\rm conv}(A)$ the convex hull of the set $A$ and by $\partial A$ its boundary. Moreover, we denote by $e_1,\dots,e_n$ the standard unit vectors in $\IR^n$.

\subsection{Proof of Theorem \ref{thm:stochastic-rep}}

Note that due to the 1-symmetry of Lorentz norms (and thus unit balls) it is sufficient to look at the Weyl chamber
\begin{equation}\label{eq:weyl chamber}
W:=\big\{x\in\IR^n\colon x_1\ge \cdots \ge x_n\ge 0\big\}
\end{equation}
intersected with $\loou$. This set then takes the form
\[
\loou \cap W
=\Big\{x\in W\colon \sum_{i=1}^{n}i^{1/q-1}x_i\le 1\Big\}.
\]

In a first step, we determine the extreme points (vertices) of the polytope $\loou\cap W$.

\begin{lemma} \label{lem:extreme}
Let $1<q\le\infty$. The extreme points of $\loou\cap W$ are given by
\[
0,Me_1,Me_2,\dots,Me_n,
\]
where
\[
M:=
\begin{pmatrix}
	\kappa_{q}(1)^{-1}& \kappa_{q}(2)^{-1}&\cdots &  \kappa_{q}(n)^{-1}\\
	0 &\kappa_{q}(2)^{-1}& \cdots &  \kappa_{q}(n)^{-1}\\
	0 & 0 & \ddots & \vdots\\
	0 &0& \cdots &  \kappa_{q}(n)^{-1}\\
\end{pmatrix}\quad \text{with}\quad
\kappa_{q}(j)
=\sum_{i=1}^{j}i^{1/q-1},\quad j\in\{1,\dots,n\}.
\]
\end{lemma}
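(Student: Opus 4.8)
The idea is to identify the polytope $\loou\cap W$ explicitly as the simplex $\mathrm{conv}\{0,Me_1,\dots,Me_n\}$ by means of the (invertible, triangular) change of variables given by $M$. Recall that for $x\in W$ one has $x_i^*=x_i$, so that $\loou\cap W=\{x\in W:\sum_{i=1}^{n}i^{1/q-1}x_i\le 1\}$ is a compact convex polytope.

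First I would check that each of the $n+1$ candidates lies in $\loou\cap W$. The vector $Me_k$ is $\kappa_q(k)^{-1}(e_1+\dots+e_k)$, which has nonincreasing nonnegative coordinates, hence lies in $W$, and $\sum_{i=1}^{n}i^{1/q-1}(Me_k)_i=\kappa_q(k)^{-1}\sum_{i=1}^{k}i^{1/q-1}=\kappa_q(k)^{-1}\kappa_q(k)=1$; also $0\in\loou\cap W$ trivially. This already gives $\mathrm{conv}\{0,Me_1,\dots,Me_n\}\subseteq\loou\cap W$. For the reverse inclusion, given $x\in\loou\cap W$ set $y:=M^{-1}x$. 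Since $M$ is upper triangular with $(My)_i=\sum_{k\ge i}\kappa_q(k)^{-1}y_k$, solving from the bottom row upward yields $y_n=\kappa_q(n)x_n$ and $y_k=\kappa_q(k)(x_k-x_{k+1})$ for $k<n$. Because $x\in W$ we get $y_k\ge 0$ for all $k$, and by Abel summation (writing $x_{n+1}:=0$ and $\kappa_q(0):=0$) one has $\sum_{k=1}^{n}y_k=\sum_{k=1}^{n}\kappa_q(k)(x_k-x_{k+1})=\sum_{k=1}^{n}(\kappa_q(k)-\kappa_q(k-1))x_k=\sum_{k=1}^{n}k^{1/q-1}x_k\le 1$. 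Hence $x=\sum_{k=1}^{n}y_k\,(Me_k)+\bigl(1-\sum_{k=1}^{n}y_k\bigr)\cdot 0$ is a convex combination of the claimed points, so $\loou\cap W=\mathrm{conv}\{0,Me_1,\dots,Me_n\}$.

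To conclude, note that $M$ is invertible, being upper triangular with nonzero diagonal entries $\kappa_q(1)^{-1},\dots,\kappa_q(n)^{-1}$, so $Me_1,\dots,Me_n$ are linearly independent and therefore $\{0,Me_1,\dots,Me_n\}$ is an affinely independent set of $n+1$ points in $\IR^n$. Thus $\loou\cap W$ is an $n$-dimensional simplex, and the extreme points of a simplex are precisely its defining vertices, namely $0,Me_1,\dots,Me_n$.

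The only step requiring a little care is the Abel-summation identity $\sum_{k=1}^{n}\kappa_q(k)(x_k-x_{k+1})=\sum_{k=1}^{n}k^{1/q-1}x_k$, which is what makes the norm constraint translate into the clean condition $\sum_k y_k\le 1$; the rest is bookkeeping. (An alternative route is the standard linear-programming characterization of vertices as points at which $n$ linearly independent constraints among $x_i\ge x_{i+1}$, $x_n\ge 0$ and $\sum_i i^{1/q-1}x_i\le 1$ are active, and then enumerating the possibilities; but the simplex identification above is cleaner and, as a bonus, directly yields the convex-hull description of $\loou\cap W$ that is convenient later.)
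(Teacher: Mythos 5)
Your proof is correct and follows essentially the same route as the paper: both identify $\loou\cap W$ as the image of a standard simplex under the triangular change of variables $y_k=\kappa_q(k)(x_k-x_{k+1})$, with the Abel-summation identity turning the Lorentz constraint into $\sum_k y_k\le 1$. The paper phrases this via the summing basis $Ae_1,\dots,Ae_n$ and invariance of extreme points under the bijective linear map, while you verify the two convex-hull inclusions and affine independence directly, but the underlying argument is the same.
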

\begin{proof}
First, we note that the Weyl chamber $W$ is the positive hull of the summing basis
\[
Ae_1=e_1,Ae_2=e_1+e_2,\dots, Ae_n=e_1+\cdots+e_n\quad \text{with}\quad
A=
\begin{pmatrix}
1 & 1 & \cdots & 1\\
0 & 1 & \cdots & 1\\
\vdots & \vdots & \ddots & \vdots\\
0 & 0 & \cdots & 1\\
\end{pmatrix},
\]
because every $x=(x_i)_{i=1}^n\in W$ can be represented as
\[
x
=\sum_{i=1}^{n}(x_i-x_{i+1})(Ae_i)
=A\Big(\sum_{i=1}^{n}(x_i-x_{i+1})e_i\Big),
\]
where we set $x_{n+1}:=0$. A substitution gives
\[
\loou\cap W
=\Big\{x\in \IR^n\colon \sum_{i=1}^{n}i^{1/q-1}x_i\le 1, x=A\Big(\sum_{i=1}^{n}y_ie_i\Big),y_i\ge 0 \Big\}
=\Big\{Ay\in \IR^n\colon \sum_{i=1}^{n}\kappa_q(i)y_i\le 1, y_i\ge 0 \Big\}.
\]
Since $A$ is linear and bijective, it follows that the extreme points are given by $0$ and
$A(\kappa_q(i)^{-1}e_i)=Me_i$, $i\in \{1,\dots,n\}$.
\end{proof}

We now use this lemma to prove Theorem~\ref{thm:stochastic-rep}.

\begin{proof}[Proof of Theorem~\ref{thm:stochastic-rep}]
From Lemma~\ref{lem:extreme}, we know the extreme points of $\loou\cap W$, which are given by the images under $M$ of $0$ and the unit vectors $e_1,\dots,e_n$; $M$ is given as in Lemma \ref{lem:extreme}. Thus
\begin{equation} \label{eq:convex-hull}
\loou\cap W
= {\rm conv}\big\{0,Me_1,\dots,Me_n\big\}
=M\big({\rm conv}\big\{0,e_1,\dots,e_n\big\}\big).
\end{equation}
The convex hull ${\rm conv}\big\{0,e_1,\dots,e_n\big\}$
is the projection of the standard $n$-dimensional simplex $\Delta_n$ in $\IR^{n+1}$, i.e., of
\[
\Delta_{n}:=\Bigg\{x=(x_i)_{i=1}^{n+1}\in\IR^{n+1}\,:\,x_i \geq 0,\,\sum_{i=1}^{n+1}x_i = 1\Big\} = {\rm conv}\big\{ e_1,\dots,e_{n+1}\bigg\}\subseteq \IR^{n+1},
\]
onto the first $n$ coordinates. It is well known (see, e.g., \cite{MathaiBook}) that
\[
Z^{(n)}:=\Big(\frac{E_1}{\sum_{j=1}^{n+1}E_j},\dots,\frac{E_{n+1}}{\sum_{j=1}^{n+1}E_j}\Big)
\]
is uniformly distributed on $\Delta_{n}$, where $E_1,\dots,E_{n+1}$ are i.i.d.~standard exponential random variables. Applying the projection and then $M$ to $Z^{(n)}$, we see that by \eqref{eq:convex-hull} the random vector
\[
\Big(\frac{\sum_{j=1}^{n}\kappa_{q}(j)^{-1}E_j}{\sum_{j=1}^{n+1}E_j},\frac{\sum_{j=2}^{n}\kappa_{q}(j)^{-1}E_j}{\sum_{j=1}^{n+1}E_j},\dots,\frac{\sum_{j=n}^{n}\kappa_{q}(j)^{-1}E_j}{\sum_{j=1}^{n+1}E_j}\Big)
\]
is uniformly distributed on the intersection $\loou\cap W$. Using the 1-symmetry of $\loou$, we may change the order of coordinates (introducing a random permutation $\pi$ of $\{1,\dots,n\}$) as well as the signs of coordinates (introducing a random sign vector $(\varepsilon_1,\dots,\varepsilon_n)\in\{-1,1\}^n$) and obtain the desired probabilistic representation.
\end{proof}

We now present the proof of the asymptotics of the volume radius of a Lorentz ball. For this and also other proofs we shall need the following well-known asymptotics, see for example \cite[Lemma 3.4]{JP2022}.

\begin{lemma} \label{lem:sum-asymp}
Let $\alpha>-1$. There exists a constant $c_\alpha\in(0,\infty)$ and an absolute constant $c\in(0,\infty)$ such that for all $n\in\IN$,
\[
\Big|\sum_{i=1}^{n}i^{\alpha}-\frac{1}{\alpha+1}n^{\alpha+1}\Big|\le c_{\alpha}n^{\max\{0,\alpha\}} \qquad \text{ and }\qquad \Big|\sum_{i=1}^{n}i^{-1}-\log n\Big|\le c.
\]
\end{lemma}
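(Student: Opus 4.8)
The statement to prove is Lemma~\ref{lem:sum-asymp}, asserting the standard asymptotic expansions for $\sum_{i=1}^n i^\alpha$ when $\alpha > -1$ and for the harmonic sum $\sum_{i=1}^n i^{-1}$.

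The plan is to prove the two estimates by comparing the sums with the corresponding integrals $\int_0^n x^\alpha\,\dd x = \frac{1}{\alpha+1}n^{\alpha+1}$ and $\int_1^n x^{-1}\,\dd x = \log n$, using monotonicity of the integrand on the relevant ranges. For the first estimate, I would treat separately the cases $\alpha \ge 0$ and $-1 < \alpha < 0$, because in each case the monotonicity direction of $x \mapsto x^\alpha$ is different, which determines whether the sum over-- or under-estimates the integral.

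First I would handle the case $\alpha \ge 0$. Here $x \mapsto x^\alpha$ is nondecreasing, so for each $i \ge 1$ one has $\int_{i-1}^{i} x^\alpha\,\dd x \le i^\alpha \le \int_{i}^{i+1} x^\alpha\,\dd x$. Summing over $i = 1,\dots,n$ gives
\[
\frac{n^{\alpha+1}}{\alpha+1} = \int_0^n x^\alpha\,\dd x \le \sum_{i=1}^n i^\alpha \le \int_1^{n+1} x^\alpha\,\dd x = \frac{(n+1)^{\alpha+1}-1}{\alpha+1}.
\]
Hence $0 \le \sum_{i=1}^n i^\alpha - \frac{n^{\alpha+1}}{\alpha+1} \le \frac{(n+1)^{\alpha+1} - n^{\alpha+1}}{\alpha+1}$, and by the mean value theorem applied to $t \mapsto t^{\alpha+1}$ the right-hand side is at most $(n+1)^\alpha \le 2^\alpha n^\alpha$, giving the claim with $c_\alpha = 2^\alpha$ (for $\alpha \ge 0$, $\max\{0,\alpha\} = \alpha$). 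For $-1 < \alpha < 0$, the function is nonincreasing, so the inequalities reverse: $i^\alpha \le \int_{i-1}^i x^\alpha\,\dd x$ for $i \ge 2$ and $i^\alpha \ge \int_i^{i+1} x^\alpha\,\dd x$ for $i \ge 1$. Summing, $\int_1^{n+1} x^\alpha\,\dd x \le \sum_{i=1}^n i^\alpha \le 1 + \int_1^n x^\alpha\,\dd x$, which yields $\big|\sum_{i=1}^n i^\alpha - \frac{n^{\alpha+1}}{\alpha+1}\big| \le C_\alpha$ for a constant depending only on $\alpha$ (the boundary terms $\frac{1}{\alpha+1}$ and $1 - \frac{1}{\alpha+1}$ are bounded), which gives the bound with $\max\{0,\alpha\} = 0$. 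The harmonic case is the classical comparison: monotonicity of $x \mapsto 1/x$ gives $\log(n+1) \le \sum_{i=1}^n 1/i \le 1 + \log n$, hence $|\sum_{i=1}^n i^{-1} - \log n| \le 1$, so $c = 1$ works.

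There is essentially no obstacle here; the only mild care needed is to keep the dependence of the constant $c_\alpha$ on $\alpha$ transparent and to verify that the exponent on the error term is genuinely $\max\{0,\alpha\}$ and not worse — this is exactly why the two sub-cases must be separated. Since the result is explicitly quoted as well-known with a reference to \cite[Lemma 3.4]{JP2022}, one could alternatively just cite it; but the integral-comparison argument above is short and self-contained, so I would include it for completeness.
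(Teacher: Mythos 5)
Your proof is correct. The paper does not actually prove this lemma --- it simply cites \cite[Lemma 3.4]{JP2022} --- and your integral-comparison argument, with the case split at $\alpha=0$ to track the monotonicity of $x\mapsto x^\alpha$, is the standard and complete way to establish it; all the boundary terms and the constants $c_\alpha=2^\alpha$ (for $\alpha\ge 0$), $c_\alpha=\tfrac{1}{\alpha+1}$ (for $-1<\alpha<0$) and $c=1$ check out.
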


\begin{proof}[Proof of Corollary~\ref{cor:volume}]
	From Lemma \ref{lem:extreme}, see Equation~\eqref{eq:convex-hull}, we know that $\loou\cap W=M({\rm conv}\{0,e_1,\dots,e_n\})$. Together with the $1$-symmetry, this leads to
	\begin{align*}
     \vol_n(\loou) & =2^nn! \cdot\vol_n\big(\loou\cap W\big)
     = 2^n \det(M)
     =2^n\prod_{i=1}^n \kappa_q(i)^{-1},
	\end{align*}
	where $W$ is as in \eqref{eq:weyl chamber} and $M$ as in Lemma~\ref{lem:extreme}. Now, observe that by Lemma~\ref{lem:sum-asymp} for a suitable constant $c_q\in(0,\infty)$ just depending on the parameter $q>1$ the following asymptotics hold:
	\begin{equation}
	\label{eq:kappa-asymp}
	|\kappa_q(i)-qi^{1/q}|\le c_q\,\,\text{ if }q<\infty \quad \text{and}\quad|\kappa_q(i)-\log(i+1)|\le c_{\infty} \,\,\text{ if }q=\infty.
\end{equation}
We start with case $q<\infty$. Here, we have
\begin{align}\label{eq:log product}
\log \Big(\prod_{i=1}^n \kappa_q(i)^{-1}\Big)^{1/n}
& =-\frac{1}{n}\sum_{i=1}^{n}\log \kappa_q(i) = -\frac{1}{n}\sum_{i=1}^{n}\log \Big(\frac{\kappa_q(i)\cdot qi^{1/q}}{qi^{1/q}}\Big) \cr
& =-\log q - \frac{1}{qn}\sum_{i=1}^{n}\log i
-\frac{1}{n}\sum_{i=1}^{n}\log\Big(\frac{\kappa_q(i)}{qi^{1/q}}\Big).
\end{align}
Using $\log(n!)= n\log n - n +O(\log n)$, we deduce that the first sum on the right-hand side satisfies, as $n\to\infty$,
\[
-\frac{1}{qn}\sum_{i=1}^{n}\log i
= - \frac{1}{q}\log(n) + \frac{1}{q} +O\Big(\frac{\log n}{n}\Big).
\]
By \eqref{eq:kappa-asymp} we have
\[
\lim_{i\to\infty}\log\Big(\frac{\kappa_q(i)}{qi^{1/q}}\Big)=0.
\]
By Cesàro's lemma,

\[
\lim_{n\to\infty}\frac{1}{n}\sum_{i=1}^{n}\log\Big(\frac{\kappa_q(i)}{qi^{1/q}}\Big)=0.
\]
This shows the desired result. If $q=\infty$, we proceed similarly but now use
\[
\frac{1}{n}\sum_{i=3}^{n}\log \log i
= \frac{1}{n}\int_3^n \log\log x + O\Big(\frac{1}{\log n}\Big)
=\log\log n- \frac{{\rm li}(n)}{n} +O\Big(\frac{1}{\log n}\Big)
= \log \log n+O\Big(\frac{1}{\log n}\Big),
\]
where the asymptotics of the logarithmic integral ${\rm li}(x):=\int_{0}^x\frac{\dd t}{\ln(t)}, x>1,$ are well known (\cite[Chapter 5]{AS64}). 
\end{proof}

\subsection{Proof of Theorems \ref{thm:main}}

We now continue with the proof of Theorem~\ref{thm:main}, i.e., the weak convergence results of the empirical distribution. In what follows, for two sequences $(a_n)_{n\in\IN}$ and $(b_n)_{n\in\IN}$ of non-negative real numbers, we shall write $a_n\lesssim b_n$ if there exists $C\in(0,\infty)$ such that for all $n\in\IN$ it holds that $a_n \leq C b_n$. If in addition $a_n \gtrsim b_n$, then we write $a_n \asymp b_n$. The following statement will be crucial in the proof.

\begin{lemma}\label{lem:uniform-app}
	Let $1<q\le \infty$ and for $n\in\N$ assume that $\widetilde{X}^{(n)}=(\widetilde{X}_1^{(n)},\dots,\widetilde{X}_n^{(n)})$ is uniformly distributed in $\loo$. Then, for any sequence $(a_n)_{n\in\IN}$ with $a_n\stackrel{n\to\infty}{\longrightarrow}\infty$, we have 
\[
\IP\Bigg[\sup_{1\le i\le n}\Big|(\widetilde{X}_i^{(n)})^*-G_q\Big(\frac{i-1}{n}\Big)\Big|\ge a_n\delta_{n}\Bigg]\stackrel{n\to\infty}{\longrightarrow} 0,
\]
where
\[
G_q\colon [0,1]\to [0,(q-1)^{-1}],\quad x\mapsto G_q(x)
=\frac{1}{q-1}(1-x^{1-1/q})\quad \text{if }q<\infty
\]
and $G_{\infty}\colon [0,1]\to [0,1]$, $x\mapsto 1-x$. The error bounds may be chosen as follows:
\[
\delta_{n}
=
\begin{cases}
	n^{-(1-1/q)}&: 1<q<2,\\
	n^{-1/2}\log n&: q=2,\\
	n^{-1/q}&: q\in (2,\infty),\\
	(\log n)^{-1}&: q=\infty,\\
\end{cases}\quad
\text{where }n\in\IN.
\]
\end{lemma}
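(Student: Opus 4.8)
The plan is to reduce everything, via Theorem~\ref{thm:stochastic-rep}, to partial sums of weighted i.i.d.\ exponentials, and then to split a deterministic ``bias'' (Riemann--sum) term from a stochastic ``fluctuation'' term that is handled by a maximal inequality.

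\emph{Step 1: reduction.} Since $\loo=n^{1/q}\loou$ for $q<\infty$ and $\widetilde{\ID}_{\infty,1}^n=\log(n+1)\IB_{\infty,1}^n$, I would realize $\widetilde X^{(n)}$ on a space carrying i.i.d.\ standard exponentials $E_1,\dots,E_{n+1}$ so that, with $b_n:=n^{1/q}$ (resp.\ $b_n:=\log(n+1)$), the non-increasing rearrangement of $(|\widetilde X_1^{(n)}|,\dots,|\widetilde X_n^{(n)}|)$ is \emph{exactly}
\[
(\widetilde X_i^{(n)})^*=\frac{N_i}{D},\qquad N_i:=\frac1n\sum_{j=i}^n\frac{b_n}{\kappa_q(j)}\,E_j,\qquad D:=\frac1n\sum_{j=1}^{n+1}E_j,
\]
because letting the lower index $\pi(k)$ in Theorem~\ref{thm:stochastic-rep} run over $\{1,\dots,n\}$ already sorts the coordinates (the inner sum is monotone in that index), and neither the signs nor the permutation affect $(\cdot)^*$. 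Then $D\to1$ in probability with $|D-1|=O_\IP(n^{-1/2})$, and since $n^{-1/2}=o(\delta_n)$ in every regime, on the high-probability event $\{D\ge\tfrac12\}$ one has $|(\widetilde X_i^{(n)})^*-G_q(\tfrac{i-1}n)|\le 2|N_i-G_q(\tfrac{i-1}n)|+2G_q(0)\,|D-1|$ uniformly in $i$, using $0\le G_q\le G_q(0)<\infty$. Hence it suffices to bound $\sup_{1\le i\le n}|N_i-G_q(\tfrac{i-1}n)|$.

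\emph{Step 2: bias.} Write $N_i-G_q(\tfrac{i-1}n)=(N_i-\IE N_i)+(\IE N_i-G_q(\tfrac{i-1}n))$. For $q<\infty$ I would use $G_q(x)=\frac1q\int_x^1 t^{-1/q}\,\dd t$ together with $|b_n\kappa_q(j)^{-1}-\frac1q(n/j)^{1/q}|\lesssim_q n^{1/q}j^{-2/q}$ (which follows from $|\kappa_q(j)-qj^{1/q}|\le c_q$ in \eqref{eq:kappa-asymp} and $\kappa_q(j)\gtrsim_q j^{1/q}$) to replace $\IE N_i=\frac1n\sum_{j=i}^n b_n\kappa_q(j)^{-1}$ by $\frac1q n^{1/q-1}\sum_{j=i}^n j^{-1/q}$ at cost $O\big(n^{1/q-1}\sum_{j\le n}j^{-2/q}\big)$, and then Lemma~\ref{lem:sum-asymp} to get $\frac1q n^{1/q-1}\sum_{j=i}^n j^{-1/q}=G_q(\tfrac{i-1}n)+O(n^{-(1-1/q)})$ uniformly in $i$. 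Reading off $\sum_{j\le n}j^{-2/q}$ (which is $O(n^{1-2/q})$, $O(\log n)$, $O(1)$ according as $q>2$, $q=2$, $q<2$) and noting $n^{-(1-1/q)}\le\delta_n$ in all cases, this yields $\sup_i|\IE N_i-G_q(\tfrac{i-1}n)|=O(\delta_n)$. The case $q=\infty$ runs the same way with $\kappa_\infty(j)=H_j=\log j+O(1)$ and $\int\!\dd t/\log t=\mathrm{li}$, using the asymptotics of the logarithmic integral \cite[Ch.~5]{AS64}, and treating the indices $i$ with $i-1$ bounded separately (there both quantities equal $1+O(1/\log n)$).

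\emph{Step 3: fluctuations.} Put $c_j:=b_n\kappa_q(j)^{-1}$ and $S_i:=\sum_{j=i}^n c_j(E_j-1)$, so $N_i-\IE N_i=\tfrac1n S_i$ and $\sup_{1\le i\le n}|S_i|=\max_{1\le m\le n}|T_m|$ with $T_m:=\sum_{j=n-m+1}^n c_j(E_j-1)$ a sum of independent centered variables of total variance $\sum_{j\le n}c_j^2$. Kolmogorov's maximal inequality then gives $\IP\big[\sup_i|N_i-\IE N_i|\ge\tfrac12 a_n\delta_n\big]\le 4\sum_{j\le n}c_j^2/(a_n^2 n^2\delta_n^2)$. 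As in Step~2 one has $\sum_{j\le n}c_j^2\lesssim_q n^{2/q}\sum_{j\le n}j^{-2/q}$, which is $O(n)$ for $q\ge2$ (with an extra $\log n$ at $q=2$) and $O(n^{2/q})$ for $q<2$, while for $q=\infty$ one checks $\sum_{j\le n}c_j^2=O(n)$; inserting the value of $\delta_n$ in each regime makes the bound $O(a_n^{-2})$, $O(a_n^{-2}/\log n)$, $O(a_n^{-2}n^{-(1-2/q)})$, $O(a_n^{-2}(\log n)^2/n)$ respectively, hence $\to0$ since $a_n\to\infty$. Combined with the $O(\delta_n)$ bias bound from Step~2 and the denominator's $O_\IP(n^{-1/2})=o_\IP(\delta_n)$, this completes the proof. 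The only genuinely delicate point is the \emph{uniformity over all} $i$ in Step~2, in particular for $i$ near $1$ and for $q=\infty$, where the sharp rate $\delta_n=(\log n)^{-1}$ must be teased out of the second-order behaviour of $\sum_{j\le m}1/\log j$; by contrast Step~3 is a routine application of Kolmogorov's inequality once the variance sums are read off from Lemma~\ref{lem:sum-asymp}.
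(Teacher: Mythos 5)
Your proposal is correct and follows essentially the same route as the paper: reduce via the stochastic representation of Theorem~\ref{thm:stochastic-rep}, dispose of the common denominator $\frac1n\sum_{j\le n+1}E_j$ using $|D-1|=O_\IP(n^{-1/2})=o_\IP(\delta_n)$, and split the numerator into a deterministic bias (controlled by $|\kappa_q(j)-qj^{1/q}|\le c_q$ plus sum--integral comparison, exactly the paper's $B_j$ and $C_j$ terms) and a centered fluctuation term handled by Kolmogorov's maximal inequality with the same variance computations in each regime of $q$. The only cosmetic difference is that you group the two deterministic errors into $\IE N_i-G_q(\tfrac{i-1}{n})$ rather than listing them separately, and you bound the denominator slightly more directly; neither changes the substance.
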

\begin{proof}
Let $1<q<\infty$ and $i\in \{1,\dots,n\}$. Using the probabilistic representation from Theorem~\ref{thm:stochastic-rep}, we obtain for the $i^{th}$-largest entry $(\widetilde{X}_i^{(n)})^*$ of $(|\widetilde{X}_i^{(n)}|)_{i=1}^n$ that
\[
(\widetilde{X}_i^{(n)})^*
\stackrel{\ed}{=}Y_i^{(n)}\Big(\frac{1}{n}\sum_{j=1}^{n+1}E_j\Big)^{-1}\quad\text{with}\quad
Y_i^{(n)}=\frac{1}{n}\sum_{j=i}^{n}\frac{n^{1/q}}{\kappa_q(j)}E_j,
\]
and the same holds in the sense of joint distributions. We will prove the statement for the random variable $Y_i^{(n)}$ instead of $(\widetilde{X}_i^{(n)})^*$ and then the statement of the lemma easily follows, because as a consequence of the triangle inequality and the sub-additivity of the supremum, we have
\begin{align*}
&\IP\Bigg[\sup_{1\le i\le n}\Big|(\widetilde{X}_i^{(n)})^*-G_q\Big(\frac{i-1}{n}\Big)\Big|\ge a_n\delta_n\Bigg] \cr
& \leq  \IP\Bigg[\sup_{1\le i\le n}\Big|(\widetilde{X}_i^{(n)})^*-Y_i^{(n)}\Big|\ge \frac{a_n}{2}\delta_n\Bigg] + \IP\Bigg[\sup_{1\le i\le n}\Big|Y_i^{(n)}-G_q\Big(\frac{i-1}{n}\Big)\Big|\ge \frac{a_n}{2}\delta_n\Bigg].
\end{align*}
In order to estimate the first probability on the right-hand side, we make use of the fact that
\begin{equation}\label{eq: sup equality}
\sup_{1\le i\le n}\Big|(\widetilde{X}_i^{(n)})^*-Y_i^{(n)}\Big|
\stackrel{\ed}{=} \big(\sup_{1\leq i \leq n} Y_i^{(n)}\big)\cdot\Big|\Big(\frac{1}{n}\sum_{j=1}^{n+1}E_j\Big)^{-1}-1\Big|
= \|Y^{(n)}\|_{\infty}\cdot\Big|\Big(\frac{1}{n}\sum_{j=1}^{n+1}E_j\Big)^{-1}-1\Big|.
\end{equation}

From the proof below it will follow that the first factor on the right-hand side of \eqref{eq: sup equality} is bounded by a suitable constant with probability tending to one. The second factor satisfies (via the central limit theorem)
\begin{align*}
\IP\Bigg[\Big|\Big(\frac{1}{n}\sum_{j=1}^{n+1}E_j\Big)^{-1}-1\Big|  \ge \delta_n \Bigg]
& \leq \IP\Bigg[\frac{1}{n}\sum_{j=1}^{n+1}E_j\le (1+\delta_n)^{-1} \Bigg]
 \leq \IP\Bigg[\frac{1}{n}\sum_{j=1}^{n}E_j\le (1+\delta_n)^{-1} \Bigg] \cr
& =\IP\Bigg[\frac{s_n}{n}\sum_{j=1}^{n}(E_j-1)\le -1 \Bigg] = \IP\Bigg[\frac{1}{\sqrt{n}}\sum_{j=1}^{n}(E_j-1)\le -\frac{\sqrt{n}}{s_n} \Bigg]
\stackrel{n\to\infty}{\longrightarrow} 0,
\end{align*}
where
$s_n:= 1 + \delta_n^{-1} \asymp \delta_n^{-1}$ satisfies $\sqrt{n}/s_n \to +\infty$, as a consequence of the definition of $\delta_n$ stated in the lemma.  Taking into account \eqref{eq: sup equality} and the upper bound on the largest coordinate of $Y^{(n)}$, one easily derives the statement for $\widetilde{X}^{(n)}$ instead of $Y^{(n)}$.

We now start with the proof of the statement for $Y_i^{(n)}$. Consider the decomposition
\begin{equation}\label{eq:three-sums}
	Y^{(n)}_i-G_q\Big(\frac{i-1}{n}\Big)
=\frac{1}{n}\sum_{j=i}^{n}\frac{n^{1/q}}{\kappa_q(j)}E_j-\frac{1}{q}\int_{(i-1)/n}^1 x^{-1/q}\,\dd x
= \frac{1}{n}\sum_{j=i}^{n}A_j +\frac{1}{n}\sum_{j=i}^{n}B_j +\frac{1}{qn}\sum_{j=i}^{n} C_j
\end{equation}
with summands
\[
A_j=\frac{n^{1/q}}{\kappa_q(j)}(E_j-1),\quad
B_j=\frac{n^{1/q}}{\kappa_q(j)}-\frac{n^{1/q}}{qj^{1/q}},\quad
C_j=\Big(\frac{j}{n}\Big)^{-1/q}-\int_{j-1}^{j} \Big(\frac{x}{n}\Big)^{-1/q}\,\dd x, \quad j=i,\dots,n.
\]
Here we used $G_q(x) =\frac{1}{q}\int_{x}^1 y^{-1/q}\,\dd y$, $x\in [0,1]$. Only the first sum on the right-hand side of \eqref{eq:three-sums} is random and its variance is
\[
\frac{1}{n^2}\sum_{j=i}^{n}{\rm Var}(A_j)
=\frac{1}{n^2}\sum_{j=i}^{n}\Big(\frac{n^{1/q}}{\kappa_q(j)}\Big)^2
\lesssim_q n^{-2(1-1/q)}\sum_{j=1}^{n}j^{-2/q}
\lesssim_q
\begin{cases}
	n^{-2(1-1/q)}&\text{if }1<q<2,\\
	n^{-1}\log n&\text{if }q=2,\\
	n^{-1}&\text{if }q>2,\\
\end{cases}
\]
which tends to zero as $n\to\infty$. Here we used the asymptotics $\kappa_q(j)\asymp_q j^{1/q}$ and sum-integral approximation, see \eqref{eq:kappa-asymp} and Lemma~\ref{lem:sum-asymp}.

For the second sum we have due to \eqref{eq:kappa-asymp}
\[
|B_j|=\frac{n^{1/q}}{qj^{1/q}}\frac{|qj^{1/q}-\kappa_q(j)|}{\kappa_q(j)}
\asymp_q \frac{n^{1/q}}{j^{1/q}}\frac{1}{j^{1/q}}
=n^{1/q}j^{-2/q},
\]
which gives that
\[
\frac{1}{n}\sum_{j=i}^{n}|B_j|
\lesssim_q n^{1/q-1}\sum_{j=1}^{n}j^{-2/q}
\lesssim_q
\begin{cases}
	n^{-(1-1/q)}&\text{if }1<q<2,\\
	n^{-1/2}\log n&\text{if }q=2,\\
	n^{-1/q}&\text{if }q>2.\\
\end{cases}
\]
For the third sum we use that for any $j\ge 1$, we find $\xi\in (j-1,j)$ such that
\[
0 \le \int_{j-1}^{j}\Big(\frac{x}{n}\Big)^{-1/q}\dd x - \Big(\frac{j}{n}\Big)^{-1/q}
= \Big(\frac{\xi}{n}\Big)^{-1/q} - \Big(\frac{j}{n}\Big)^{-1/q}
\lesssim \Big(\frac{j}{n}\Big)^{-1/q}\frac{1}{j},
\]
which follows from the mean value theorem used twice. Thus, the third sum satisfies
\[
\frac{1}{n}\sum_{j=i}^{n}|C_j|
\lesssim_q n^{-(1-1/q)}\sum_{j=1}^{n}j^{-(1+1/q)}
\lesssim_q n^{-1+1/q}.
\]
Lastly, we note that
\begin{align*}
\IP\Bigg[\sup_{1\le i\le n}\Big|Y_i^{(n)}-G_q\Big(\frac{i-1}{n}\Big)\Big|\ge a_n\delta_n\Bigg]
&\le \IP\Bigg[\sup_{1\le i\le n}\Big|\frac{1}{n}\sum_{j=i}^{n}A_j\Big|\ge \frac{a_n}{3}\delta_n\Bigg]\cr
&+ \IP\Bigg[\sup_{1\le i\le n}\Big|\frac{1}{n}\sum_{j=i}^{n}B_j\Big|\ge \frac{a_n}{3}\delta_n\Bigg] \cr
&+ \IP\Bigg[\sup_{1\le i\le n}\Big|\frac{1}{n}\sum_{j=i}^{n}C_j\Big|\ge \frac{a_n}{3}\delta_n\Bigg].
\end{align*}
For large $n$ the last two summands vanish because of the previous estimates and the first tends to zero due to Kolmogorov's inequality (\cite[Thm. 5.28]{Kle08}). This proves the statement for $q<\infty$.

If $q=\infty$ we need to carry out several modifications, in particular adapt the definition of $Y^{(n)}$ by replacing $n^{1/q}$ with $\log(n+1)$. Let us write for $i\in \{1,\dots,n\}$ again the coordinate as in equation \eqref{eq:three-sums}, i.e.,
\[
Y^{(n)}_i-G_{\infty}\Big(\frac{i-1}{n}\Big)
=\frac{1}{n}\sum_{j=i}^{n}\frac{\log(n+1)}{\kappa_{\infty}(j)}E_j-\Big(1-\frac{i-1}{n}\Big)
= \frac{1}{n}\sum_{j=i}^{n}A_j +\frac{1}{n}\sum_{j=i}^{n}B_j +\frac{1}{n}\sum_{j=i}^{n} C_j
\]
with summands
\[
A_j=\frac{\log(n+1)}{\kappa_{\infty}(j)}(E_j-1),\quad
B_j=\frac{\log(n+1)}{\kappa_{\infty}(j)}-\frac{\log(n+1)}{\log(j+1)},\quad
C_j=\frac{\log(n+1)}{\log(j+1)}-1.
\]
One can estimate the errors similarly after adjusting $\delta_n$. For the first sum we have $\kappa_{\infty}(j)\asymp \log(j+1)$ using \eqref{eq:kappa-asymp} and thus
\[
\frac{1}{n^2}\sum_{j=i}^{n}{\rm Var}(A_j)
=\frac{1}{n^2}\sum_{j=i}^{n}\Big(\frac{\log(n+1)}{\kappa_{\infty}(j)}\Big)^2
\lesssim \frac{\log^2(n+1)}{n^2}\sum_{j=1}^{n}\frac{1}{\log^2(j+1)}
\lesssim \frac{1}{n},
\]
where in the last estimate we used that $\log^2(n+1)$ is slowly varying and thus the arithmetic mean is equivalent to the largest summand, see also \cite[Prop. 1.5.8]{BGT87}. Similarly, we have
\[
\frac{1}{n}\sum_{j=i}^{n}|B_j|
\lesssim \frac{\log(n+1)}{n}\sum_{j=1}^{n}\frac{1}{\log^2(j+1)}
\lesssim \frac{1}{\log(n+1)}.
\]
The third sum satisfies
\[
\frac{1}{n}\sum_{j=i}^{n}|C_j|
\le\frac{1}{n}\sum_{j=1}^{n}|C_j|
=\frac{\log(n+1)}{n}\sum_{j=1}^{n}\frac{1}{\log(j+1)}-1
\lesssim \frac{1}{\log(n+1)},
\]
where we used the approximation
\[
\sum_{j=1}^{n}\frac{1}{\log(j+1)}
=\int_{2}^{n+1} \frac{1}{\log x}\,\dd x+O(1)
= {\rm li}(n+1) +O(1)
= \frac{n}{\log(n+1)}+O\Big(\frac{n}{\log^2(n+1)}\Big).
\]
The proof is now completed as in the case $q<\infty$.
\end{proof}


For the proof of Theorem~\ref{thm:main} we use a symmetrization trick, which we present in the following lemma; the result may well be known, but we were unable to find it in the literature. 

\begin{lemma} \label{lem:symmetry}
	For each $n\in\IN$ let $X^{(n)}=(X_1^{(n)},\dots,X_n^{(n)})$ be a random vector in $\IR^n$ with identically distributed coordinates such that $(\varepsilon_1 X_1^{(n)},\dots,\varepsilon_n X_n^{(n)})\overset{\ed}{=}X^{(n)}$ for any choice of signs $(\varepsilon_1,\dots,\varepsilon_n)\in \{-1,1\}^n$. If for every bounded continuous function $f\colon [0,\infty)\to\IR$
\[
\frac{1}{n}\sum_{i=1}^{n}f\big(|X^{(n)}_i|\big) \xrightarrow[n\to\infty]{\IP} \int_{0}^{\infty}f(x)\varphi(x)\, \dd x,
\]
holds for a density $\varphi\colon [0,\infty)\to [0,\infty)$, then for every bounded continuous function $f\colon \IR\to\IR$
\[
\frac{1}{n}\sum_{i=1}^{n}f\big(X^{(n)}_i\big) \xrightarrow[n\to\infty]{\IP}\frac{1}{2}\int_{-\infty}^{\infty}f(x)\varphi(|x|)\, \dd x.
\]
\end{lemma}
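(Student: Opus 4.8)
The plan is to deduce the symmetric statement from the one-sided statement by conditioning on signs. Let $\varepsilon=(\varepsilon_1,\dots,\varepsilon_n)$ be uniform on $\{-1,1\}^n$ and independent of $X^{(n)}$. By the assumed sign-invariance, $X^{(n)}\overset{\ed}{=}(\varepsilon_1|X_1^{(n)}|,\dots,\varepsilon_n|X_n^{(n)}|)$, so it suffices to prove
\[
\frac{1}{n}\sum_{i=1}^{n}f\big(\varepsilon_i|X_i^{(n)}|\big)\xrightarrow[n\to\infty]{\IP}\frac{1}{2}\int_{-\infty}^{\infty}f(x)\varphi(|x|)\,\dd x
\]
for every bounded continuous $f\colon\IR\to\IR$. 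First I would split $f$ into its even and odd parts, $f=f_{\rm e}+f_{\rm o}$ with $f_{\rm e}(x)=\tfrac12(f(x)+f(-x))$ and $f_{\rm o}(x)=\tfrac12(f(x)-f(-x))$; both are bounded and continuous. For the even part, $f_{\rm e}(\varepsilon_i|X_i^{(n)}|)=f_{\rm e}(|X_i^{(n)}|)$ does not see the signs, and applying the hypothesis to the bounded continuous function $x\mapsto f_{\rm e}(x)$ on $[0,\infty)$ gives convergence in probability to $\int_0^\infty f_{\rm e}(x)\varphi(x)\,\dd x$, which by evenness of $f_{\rm e}$ equals $\tfrac12\int_{-\infty}^\infty f(x)\varphi(|x|)\,\dd x$. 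So the whole content is to show that the odd part contributes nothing in the limit.

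For the odd part, set $g:=f_{\rm o}$, so $g$ is bounded (say $\|g\|_\infty\le C$), continuous, and odd, and I must show $\frac1n\sum_{i=1}^n g(\varepsilon_i|X_i^{(n)}|)\to 0$ in probability. The natural route is a second-moment (conditional variance) estimate: condition on $X^{(n)}$ and use that the $\varepsilon_i$ are i.i.d.\ symmetric and independent of $X^{(n)}$. Writing $S_n:=\sum_{i=1}^n g(\varepsilon_i|X_i^{(n)}|)$, oddness of $g$ gives $\IE[g(\varepsilon_i|X_i^{(n)}|)\mid X^{(n)}]=\tfrac12 g(|X_i^{(n)}|)+\tfrac12 g(-|X_i^{(n)}|)=0$, and by conditional independence across $i$,
\[
\IE\!\left[\Big(\tfrac1n S_n\Big)^2\,\Big|\,X^{(n)}\right]=\frac{1}{n^2}\sum_{i=1}^n g(\varepsilon_i|X_i^{(n)}|)^2\cdot\text{(conditionally)}\ \le\ \frac{C^2}{n}.
\]
More carefully: $\IE[(\tfrac1n S_n)^2\mid X^{(n)}]=\tfrac1{n^2}\sum_{i=1}^n\IE[g(\varepsilon_i|X_i^{(n)}|)^2\mid X^{(n)}]\le C^2/n$ since the conditional cross terms vanish by the zero conditional mean and independence of the $\varepsilon_i$. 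Taking expectations, $\IE[(\tfrac1n S_n)^2]\le C^2/n\to0$, so $\tfrac1n S_n\to0$ in $L^2$ and hence in probability. Combining with the even part via the triangle inequality (convergence in probability is preserved under sums) finishes the proof.

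I do not expect any genuine obstacle here; the only point requiring a little care is bookkeeping the conditioning argument so that the cross terms are legitimately seen to vanish — one should note that, conditionally on $X^{(n)}$, the random variables $g(\varepsilon_i|X_i^{(n)}|)$, $i=1,\dots,n$, are independent (functions of the independent $\varepsilon_i$) with conditional mean zero, which is exactly what kills the off-diagonal contributions in the expansion of $S_n^2$. The rest is routine: even/odd decomposition, one application of the hypothesis, a $C^2/n$ variance bound, and Chebyshev. The assumption that the coordinates are identically distributed is in fact not needed for this argument — only the sign-symmetry is used — but stating it as in the lemma does no harm.
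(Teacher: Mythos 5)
Your proof is correct and follows essentially the same route as the paper: the even part of $f$ is exactly the conditional expectation $\IE[S_n\mid X^{(n)}]=\frac1n\sum_i\frac{(f(\,\cdot\,)+f(-\,\cdot\,))(|X_i^{(n)}|)}{2}$ that the paper computes, and your $C^2/n$ bound on the odd part is the paper's conditional-variance-plus-Chebyshev step. The even/odd decomposition is only a cosmetic repackaging, so there is nothing substantive to add.
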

\begin{proof}

Let $f\colon \IR\to\IR$ be a bounded and continuous function. The two mappings
\[
f_1\colon [0,\infty)\to\IR,\quad x\mapsto f(x)
\quad \text{and}\quad f_{-1}\colon [0,\infty)\to\IR,\quad x\mapsto f(-x)
\]
are again bounded and continuous on $[0,\infty)$. Let $\varepsilon_1,\dots,\varepsilon_n$ be independent Rademacher random variables, independent of all other random objects. Then the symmetry of $X^{(n)}$ implies
\[
S_n
:=\frac{1}{n}\sum_{i=1}^{n}f\big(X^{(n)}_i\big)
\overset{\rm d}{=}\frac{1}{n}\sum_{i=1}^{n}f\big(\varepsilon_i |X^{(n)}_i|\big)
=\frac{1}{n}\sum_{i=1}^{n}f_{\varepsilon_i}\big(|X^{(n)}_i|\big).
\]
By linearity of the conditional expectation, we obtain 
\begin{align}\label{eq:cond-exp}
\IE\big[S_n\big| X^{(n)}\big]
&= \frac{1}{n}\sum_{i=1}^{n}\IE\big[f_{\varepsilon_i}\big(|X^{(n)}_i|\big)\big| X^{(n)}\big]\nonumber\\
&=\frac{1}{n}\sum_{i=1}^{n}\frac{(f_1+f_{-1})(|X^{(n)}_i|)}{2}
\xrightarrow[n\to\infty]{\IP} \int_{0}^{\infty}\frac{(f_{1}+f_{-1})(x)}{2}\varphi(x)\, \dd x
=\frac{1}{2}\int_{-\infty}^{\infty}f(x)\varphi(|x|)\, \dd x.
\end{align}
Further, the conditional variance satisfies
\[
{\rm Var}\big[S_n\big| X^{(n)}\big]
=\frac{1}{2n^2}\sum_{i=1}^{n}\bigg(f_1\big(|X_i^{(n)}|\big)-\frac{(f_1+f_{-1})(|X_i^{(n)}|)}{2}\bigg)^2+\bigg(f_{-1}\big(|X_i^{(n)}|\big)-\frac{(f_{1}+f_{-1})(|X_i^{(n)}|)}{2}\bigg)^2
\le \frac{4\|f\|_{\infty}^2}{n}.
\]
Conditioned on $X^{(n)}$, Chebyshev's inequality gives, for every $\varepsilon>0$,
\[
\IP\big[|S_n-\IE[S_n\big| X^{(n)}]|>\varepsilon \big| X^{(n)}\big]
\le \frac{ {\rm Var}\big[S_n\big| X^{(n)}\big]}{\varepsilon^2}.
\]
Thus, the law of total expectation yields
\[
\IP\big[|S_n-\IE[S_n\big| X^{(n)}]|>\varepsilon \big]
\le \frac{ \IE\big[\,{\rm Var}[S_n\big| X^{(n)}]\big]}{\varepsilon^2}
\le \frac{4\|f\|_{\infty}^2}{\varepsilon^2 n} \stackrel{n\to\infty}{\longrightarrow} 0.
\]
This together with \eqref{eq:cond-exp} implies
\[
S_n \xrightarrow[n\to\infty]{\IP} \frac{1}{2}\int_{-\infty}^{\infty}f(x)\varphi(|x|)\, \dd x,
\]
which completes the proof.
\end{proof}

\begin{proof}[Proof of Theorem~\ref{thm:main}]

We use Lemma \ref{lem:symmetry}, which allows us to pass to the absolute values. Let $\varepsilon>0$ and $f\colon [0,\infty)\to [0,\infty)$ be bounded and continuous. We have
\begin{align}\label{eq:decomposition}
\IP\Bigg[\Big|\frac{1}{n}\sum_{i=1}^{n}f\big(|\widetilde{X}_i^{(n)}|\big)-\int_{\IR_+}f(x)\, \nu_{q,1}(\dd x)\Big|>\varepsilon\Bigg]
&\le
\IP\Bigg[\Big|\frac{1}{n}\sum_{i=1}^{n}f\big(|\widetilde{X}_i^{(n)}|\big)-\frac{1}{n}\sum_{i=1}^{n}f\Big(G_q\Big(\frac{i-1}{n}\Big)\Big)\Big|>\frac{\varepsilon}{2}\Bigg]\nonumber\\
&+\IP\Bigg[\Big|\frac{1}{n}\sum_{i=1}^{n}f\Big(G_q\Big(\frac{i-1}{n}\Big)\Big)-\int_{\IR_+}f(x)\, \nu_{q,1}(\dd x)\Big|>\frac{\varepsilon}{2}\Bigg].
\end{align}

For the first summand on the right-hand side, we note that
\[
\Big|\frac{1}{n}\sum_{i=1}^{n}f\big(|\widetilde{X}_i^{(n)}|\big)-\frac{1}{n}\sum_{i=1}^{n}f\Big(G_q\Big(\frac{i-1}{n}\Big)\Big)\Big|
\le \sup_{1\leq i\leq n}\Big|f\big((\widetilde{X}_i^{(n)})^*\big)-f\Big(G_q\Big(\frac{i-1}{n}\Big)\Big)\Big|,
\]
since we may arrange the absolute values in any order without changing the average. Let $(a_n)_{n\in\IN}$ be a sequence with $a_n\stackrel{n\to\infty}{\longrightarrow}\infty$ such that $a_n\delta_n\stackrel{n\to\infty}{\longrightarrow} 0$ and $a_n\delta_n\le 1$ for all $n\in\IN$. For $n\in\IN$, we define the event
\[
A_n:=\Bigg\{\sup_{1\leq i\leq n}\Big|(\widetilde{X}_i^{(n)})^*-G_q\Big(\frac{i-1}{n}\Big)\Big|\le a_n\delta_n\Bigg\}.
\]
Then on $A_n$ it holds that: $(\widetilde{X}_i^{(n)})^*\le (\widetilde{X}_1^{(n)})^*\le G_q(0)+1$ for all $i\in\{1,\dots,n\}$  and
\[
\sup_{1\leq i\leq n}\Big|f\big((\widetilde{X}_i^{(n)})^*\big)-f\Big(G_q\Big(\frac{i-1}{n}\Big)\Big)\Big|
\le \sup_{\substack{x,y\in [0,G_q(0)+1]\\ |x-y|\le a_n\delta_n}} |f(x)-f(y)|,
\]
which tends to zero because $f$ is uniformly continuous on $[0,G_q(0)+1]$. By Lemma~\ref{lem:uniform-app} it holds that $\IP[A_n]\stackrel{n\to\infty}{\longrightarrow} 1$, and thus the first summand in \eqref{eq:decomposition} tends to zero as well. For the second summand in \eqref{eq:decomposition}, we note that the continuity of $f$ and $G_q\colon [0,1]\to [0,\infty)$ imply via a substitution that
\[
\frac{1}{n}\sum_{i=1}^{n}f\Big(G_q\Big(\frac{i-1}{n}\Big)\Big)
\stackrel{n\to\infty}{\longrightarrow} \int_0^1 f(G_q(x))\,\dd x
=\int_0^{G_q(0)}f(x)(-G_q^{-1})'(x)\,\dd x,
\]
where
\[
G_q^{-1}\colon [0,G_q(0)],\quad x\mapsto (1-(q-1)x)^{q/(q-1)},
\]
if $q<\infty$ and $G_{\infty}^{-1}=G_{\infty}$, such that $(-G_q^{-1})'$ is the density of $\nu_{q,1}$.
\end{proof}

With Theorem~\ref{thm:main} at our disposal, we are able to deduce Corollary~\ref{cor:coordinates} on the asymptotic distribution of a single coordinate.

\begin{proof}[Proof of Corollary~\ref{cor:coordinates}]
	The result essentially follows from the exchangeability of the coordinates of $\widetilde{X}^{(n)}=(\widetilde{X}_1^{(n)},\dots,\widetilde{X}_n^{(n)})$ and in mathematical physics literature is known as propagation of chaos, see, e.g., the work \cite{Szn91} of Sznitman; recall that exchangeability means that any permutation of coordinates has the same joint distribution as the original one. For convenience of the reader, we adapt an argument from \cite[p. 326]{DZ10}.
	
Let $k\in\IN$. We show for any bounded and continuous function $f\colon \IR^k\to\IR$ that
\[
\IE \big[f(\widetilde{X}_1^{(n)},\dots,\widetilde{X}_k^{(n)}) \big] \stackrel{n\to\infty}{\longrightarrow} \big(\IE f(Y)\big)^k,
\]
where $Y$ is real random variable distributed according to the Lebesgue density $f_{q,1}$. 
It is sufficient to assume that $f=\prod_{i=1}^k f_i$ for $f_i\colon \IR\to\IR$ bounded and continuous (see \cite[Appendix D, p. 356]{DZ10}). Due to exchangeability and linearity, we have
\[
\IE \Big[\prod_{i=1}^k f_i(\widetilde{X}_i^{(n)})\Big]
=\IE\Bigg[ \frac{(n-k)!}{n!} \sum_{i_1\neq \cdots \neq i_k}\prod_{i=1}^k f_i(\widetilde{X}_{i_j}^{(n)})\Bigg].
\]
Let $L_{\widetilde{X}^{(n)}}=\frac{1}{n}\sum_{i=1}^{n}\delta_{\widetilde{X}^{(n)}_i}$. Then we also have that
\[
\IE \Bigg[ \int \prod_{i=1}^k f_i \,\dd L_{\widetilde{X}^{(n)}}^{\otimes k} \Bigg]
=\IE \Bigg[ \prod_{i=1}^k \frac{1}{n}\sum_{j=1}^{n}f_i(\widetilde{X}_j^{(n)}) \Bigg]
=\frac{1}{n^k}\IE \Bigg[ \sum_{i_1,\dots,i_k}\prod_{i=1}^k f_i(\widetilde{X}_{i_j}^{(n)})\Bigg].
\]
Therefore, it holds that
\[
\Big|\IE \prod_{i=1}^k f_i(\widetilde{X}_i^{(n)}) -\IE \int \prod_{i=1}^k f_i \dd L_{\widetilde{X}^{(n)}}^{\otimes k}\Big|
=\Big|1-\frac{n!}{(n-k)!n^k}\Big|\max_{1\leq i \leq k}\|f_i\|_{\infty}^k\stackrel{n\to\infty}{\longrightarrow} 0.
\]
Applied to the $f_i$'s, Theorem~\ref{thm:main} and the continuous mapping theorem give the convergence
\[
\int \prod_{i=1}^k f_i \,\dd L_{\widetilde{X}^{(n)}}^{\otimes k}
=\prod_{i=1}^k\int  f_i \,\dd L_{\widetilde{X}^{(n)}}
\xrightarrow[n\to\infty]{\mathbb{P}} \prod_{i=1}^k f_i(Y).
\]
By uniform boundedness we can take expectations on both sides and this completes the proof.
\end{proof}



\subsection{Proof of Theorem \ref{thm:clt-inf}}

We now prove the central limit theorem for the maximum norm of a random vector in a Lorentz ball.

\begin{proof}[Proof of Theorem~\ref{thm:clt-inf}]
We have
\[
\|\widetilde{X}^{(n)}\|_{\infty}
\overset{\ed}{=} \frac{Y_n}{Z_n}\quad\text{with}\quad Y_n:=\frac{1}{n}\sum_{j=1}^{n}\frac{n^{1/q}}{\kappa_q(j)}E_j\quad\text{and}\quad Z_n:=\frac{1}{n}\sum_{j=1}^{n+1}E_j.
\]

We first prove (i). Let $1\le q<2$ and write
\[
Y_n
=n^{1/q-1}\sum_{j=1}^{n}\frac{E_j-1}{\kappa_q(j)}+\mu_{q,n} \quad \text{with} \quad \mu_{q,n}=\frac{1}{n}\sum_{j=1}^{n}\frac{n^{1/q}}{\kappa_q(j)}
\]
as in the statement of the theorem. Note that
\[
\IE\Big[ \frac{E_j-1}{\kappa_q(j)}\Big]
=0
\quad \text{and} \quad
\Var\Big[\frac{E_j-1}{\kappa_q(j)}\Big]
= \frac{1}{\kappa_q(j)^2}.
\]
Since $\kappa_q(j)\asymp_q j^{1/q}$, we have
\[
\sum_{j=1}^{\infty}\Var\Big[\frac{E_j-1}{\kappa_q(j)}\Big]
=\sum_{j=1}^{\infty}\frac{1}{\kappa_q(j)^2}
<\infty
\]
and thus by the martingale convergence theorem (\cite[Thm. 11.4]{Kle08})
\begin{equation} \label{eq:martingale-conv}
\widetilde{Y}_n
:=n^{1-1/q}Y_n-\sum_{j=1}^{n}\frac{1}{\kappa_q(j)}
=\sum_{j=1}^{n}\frac{E_j-1}{\kappa_q(j)}\xrightarrow[n\to\infty]{\rm a.s.} \sum_{j=1}^{\infty}\frac{E_j-1}{\kappa_q(j)}=:R_q.
\end{equation}
Write
\[
n^{1-1/q}\big(\|\widetilde{X}^{(n)}\|_{\infty}-\mu_{q,n}\big)
\overset{\ed}{=}n^{1-1/q}\Big(\frac{Y_n}{Z_n}-\mu_{q,n}\Big)
=\frac{\widetilde{Y}_n+\sum_{j=1}^{n}\frac{1}{\kappa_q(j)}}{1+n^{-1/2}\widetilde{Z}_n+n^{-1}}-\sum_{j=1}^{n}\frac{1}{\kappa_q(j)},
\]
where $\widetilde{Z}_n:=n^{-1/2}\sum_{j=1}^{n+1}(E_j-1)\stackrel{\ed}{\longrightarrow} \mathscr{N}(0,1)$ by the central limit theorem and Slutsky's theorem (\cite[Thm. 13.18]{Kle08}) since $E_{n+1}/\sqrt{n}\stackrel{\mathbb{P}}{\longrightarrow} 0$. Simplifying, by Slutsky's theorem,
\[
n^{1-1/q}\big(\|\widetilde{X}^{(n)}\|_{\infty}-\mu_{q,n}\big)
\overset{\ed}{=}\frac{\widetilde{Y}_n-n^{-1/2}\Big(\sum_{j=1}^{n}\frac{1}{\kappa_q(j)}\Big)(\widetilde{Z}_n+n^{-1/2})}{1+n^{-1/2}\widetilde{Z}_n+n^{-1}}\xrightarrow[n\to\infty]{\ed} R_q,
\]
since $n^{-1/2}\Big(\sum_{j=1}^{n}\frac{1}{\kappa_q(j)}\Big)\stackrel{n\to\infty}{\longrightarrow} 0$.

For convenience of the reader we prove that $R_1$ is Gumbel distributed (see, e.g., \cite[Theorem 1.1 (c)]{KPT2019_I}). It is well-known that $\sum_{j=1}^{n}\frac{1}{j}=\log n +\gamma +o(1)$ and from Remark~\ref{rem:one}, we deduce that
\[
\sum_{j=1}^{n}\frac{E_j-1}{j}
\overset{\ed}{=} \max_{1\leq j\leq n } E_j -\log n- \gamma+e_n,
\]
where $e_n \stackrel{n\to\infty}{\longrightarrow} 0$. Now the well-known fact that $\max_{1\leq j \leq n } E_j -\log n\xrightarrow[n\to\infty]{\ed} G$, where $G$ is standard Gumbel distributed, Slutsky's theorem and uniqueness of the limit imply that $R_1\overset{\ed}{=} G-\gamma$.

For the proof of (ii) let $q=2$. Defining $\widetilde{Y}_n$ as on the left-hand side of \eqref{eq:martingale-conv}, by a version of the Lindeberg central limit theorem from \cite[Thm. 5.3]{Das08}, we have
\[
\frac{1}{\sqrt{\log n}}\widetilde{Y}_n
=\frac{1}{\sqrt{\log n}}\sum_{j=1}^{n}\frac{E_j-1}{\kappa_2(j)}\xrightarrow[n\to\infty]{\ed} \mathscr{N}(0,1/4),
\]
since $\sum_{j=1}^{n}\frac{1}{\kappa_2(j)^2}\sim \frac{1}{4}\log n \stackrel{n\to\infty}{\longrightarrow} \infty$.

A similar rewriting and again Slutsky's theorem give
\[
\frac{\sqrt{n}}{\sqrt{\log n}}\big(\|\widetilde{X}^{(n)}\|_{\infty}-\mu_{2,n}\big)
=\frac{\frac{\widetilde{Y}_n}{\sqrt{\log n}}-\frac{n^{-1/2}}{\sqrt{\log n}}\Big(\sum_{j=1}^{n}\frac{1}{\kappa_2(j)}\Big)(\widetilde{Z}_n+1)}{1+n^{-1/2}\widetilde{Z}_n+n^{-1/2}}\xrightarrow[n\to\infty]{\ed} \mathscr{N}(0,1/4),
\]
since $\frac{n^{-1/2}}{\sqrt{\log n}}\Big(\sum_{j=1}^{n}\frac{1}{\kappa_2(j)}\Big)\asymp \frac{1}{\sqrt{\log n}}\stackrel{n\to\infty}{\longrightarrow} 0$.

In order to prove (iii), let now $q>2$. We will show that the vector $\sqrt{n}(Y_n-\mu_{q,n},Z_n-1)$ tends to a Gaussian random vector and then use Taylor's theorem to conclude the proof. We have
\[
\sqrt{n}(Y_n-\mu_{q,n},Z_n-1)
=\frac{1}{\sqrt{n}}\sum_{j=1}^{n}\Big(\frac{n^{1/q}}{\kappa_q(j)}(E_j-1),E_j-1\Big)+\Big(0,\frac{E_{n+1}}{\sqrt{n}}\Big).
\]
Note that, since $\frac{E_{n+1}}{\sqrt{n}}$ tends to zero in probability, by Slutsky's theorem we can omit the last summand as we are dealing with convergence in distribution.  For the sum of random vectors on the right-hand side we check Lyapunov's condition for the multivariate central limit theorem (see, e.g., 
\cite[Thm. 3.2.2]{MS01}). To this end, let $\delta>0$ with $q>2+\delta$ and compute
\begin{align*}
n^{-(2+\delta)/2}\sum_{i=1}^{n}\IE\Big|\frac{n^{1/q}}{\kappa_q(j)}E_j\Big|^{2+\delta}
&\lesssim n^{-(2+\delta)/2}\sum_{i=1}^{n}\Big|\frac{n^{1/q}}{\kappa_q(j)}\Big|^{2+\delta}\\
&\lesssim_q n^{(2+\delta)(-1/2+1/q)}\sum_{i=1}^{n}j^{-(2+\delta)/q}
\lesssim_q n^{1-(2+\delta)/2}\stackrel{n\to\infty}{\longrightarrow} 0.
\end{align*}
It remains to compute the limit of the covariance matrix. We have $\Var\Big[\frac{1}{\sqrt{n}}\sum_{j=1}^{n}E_j\Big]=1$ for all $n\in\IN$ as well as
\[
\lim_{n\to\infty}\Var\Big[\frac{1}{\sqrt{n}}\sum_{j=1}^{n}\frac{n^{1/q}}{\kappa_q(j)}E_j\Big]
=\lim_{n\to\infty}\frac{1}{n}\sum_{j=1}^{n}\frac{n^{2/q}}{\kappa_q(j)^2}
=\frac{1}{q^2}\int_0^1 x^{-2/q}\dd x
=\frac{1}{q(q-2)}
=:s_q^2,
\]
where we used that $\kappa_q(j)\sim qj^{1/q}$ as $j\to\infty$. Similarly, 
\[
\lim_{n\to\infty}\frac{1}{n}\sum_{j=1}^{n}\Cov\Big[\frac{n^{1/q}}{\kappa_q(j)}E_j,E_j\Big]
=\lim_{n\to\infty}\frac{1}{n}\sum_{j=1}^{n}\frac{n^{1/q}}{\kappa_q(j)}
=\frac{1}{q}\int_0^{1}x^{-1/q}\dd x
=\frac{1}{q-1}=:\mu_{q,\infty}.
\]
Therefore, the limiting covariance matrix is
\[
\Sigma
:=\lim_{n\to\infty}\Cov\big[\sqrt{n}(Y_n-\mu_{q,n},Z_n-1)\big]
=\begin{pmatrix}
	s_q^2 & \mu_{q,\infty}\\
	\mu_{q,\infty} & 1\\
\end{pmatrix}
\]
and we derive the central limit theorem
\begin{equation} \label{eq:clt}
\sqrt{n}(Y_n-\mu_{q,n},Z_n-1)\xrightarrow[n\to\infty]{\ed} (Y,Z)\sim \mathscr{N}(0,\Sigma).
\end{equation}
In order to prove a central limit theorem for $\frac{Y_n}{Z_n}$, note that the function $(x,y)\mapsto F(x,y)=\frac{x}{y}$ is continuously differentiable for $x,y>0$ and $\nabla F(x,y)=(\frac{1}{y},-\frac{x}{y^2})^{\top}$. Hence, by Taylor's theorem,
\begin{equation} \label{eq:taylor}
\sqrt{n}\big(F(Y_n,Z_n)-F(\mu_{q,n},1)\big)
= \sqrt{n}\big((Y_n,Z_n)-(\mu_{q,n},1)\big)\nabla F(\mu_{q,n},1)^{\top}+e_{n,q},
\end{equation}
where the random error term is
\[
e_{n,q}:=\|\sqrt{n}(Y_n-\mu_{q,n},Z_n-1)\|_2h(Y_n-\mu_{q,n},Z_n-1),
\]
where $h$ is a function tending to zero as its argument approaches zero. Due to the CLT in \eqref{eq:clt} the random variable $\|\sqrt{n}(Y_n-\mu_{q,n},Z_n-1)\|_2$ stays bounded in probability and $h(Y_n-\mu_{q,n},Z_n-1)$ tends to zero in probability. Thus, the error satisfies $e_{n,q}\xrightarrow[n\to\infty]{\IP} 0$. For the first term on the right-hand side of \eqref{eq:taylor}, we note that $\nabla F(\mu_{q,n},1)^{\top}\stackrel{n\to\infty}{\longrightarrow}\nabla F(\mu_{q,\infty},1)^{\top}$. Therefore, by \eqref{eq:clt} and \eqref{eq:taylor} together with Slutsky's theorem, we have
\[
\sqrt{n}(F(Y_n,Z_n)-F(\mu_{q,n},1)) \xrightarrow[n\to\infty]{\ed}  (Y,Z)\nabla F(\mu_{q,\infty},1)^{\top} \sim \mathscr{N}(0,\sigma_q^2)
\]
with
\[
\sigma_q^2
:=(1,-\mu_{q,\infty})\Sigma (1,-\mu_{q,\infty})^{\top}
=\frac{1}{q(q-1)^2(q-2)}.
\]
This implies the claimed central limit theorem for $\|\widetilde{X}^{(n)}\|_{\infty}$.
\end{proof}

\subsection{Proof of Theorem \ref{pro:lln-norm}}

We now present the proof of Theorem \ref{pro:lln-norm}, i.e., of the weak law of large numbers for the $\ell_r^n$ norm of random points in normalized Lorentz balls. A key ingredient is again Lemma \ref{lem:uniform-app}.

\begin{proof}[Proof of Theorem~\ref{pro:lln-norm}]
	Let $r<\infty$ and assume first that $\widetilde{X}^{(n)}$ is uniformly distributed on $\loo$. First, because of the permutation invariance of the $\ell_r^n$ norm, we have
\[
n^{-1}\|\widetilde{X}^{(n)}\|_r^r
=\frac{1}{n}\sum_{i=1}^{n}\Big((\widetilde{X}_i^{(n)})^*\Big)^r.
\]
Hence, by Lemma~\ref{lem:uniform-app} together with the same arguments as in the proof of Theorem~\ref{thm:main},
\[
\Big| n^{-1}\|\widetilde{X}^{(n)}\|_r^r - \frac{1}{n}\sum_{i=1}^{n}G_q\Big(\frac{i-1}{n}\Big)^r\Big| \xrightarrow[n\to\infty]{\mathbb P} 0.
\]
It is therefore sufficient to compute the deterministic limit
\begin{equation} \label{eq:moments}
\lim_{n\to\infty}\frac{1}{n}\sum_{i=1}^{n}G_q\Big(\frac{i-1}{n}\Big)^r
= \int_0^1 G_q(x)^r \,\dd x
=
\begin{cases}
	\int_0^{1} \Big(\frac{1}{q-1}(1-x^{1-1/q})\Big)^{r}\,\dd x & \colon  q<\infty,\\
	\int_0^1 (1-x)^r \, \dd x & \colon  q=\infty.
\end{cases}
\end{equation}
The integral can be evaluated using a substitution and the beta function. 

The case $r=\infty$ follows directly from the fact that $\|\widetilde{X}^{(n)}\|_{\infty}=(\widetilde{X}^{(n)}_1)^*$ and Lemma~\ref{lem:uniform-app}.

By Corollary~\ref{cor:volume} it remains to multiply the resulting constants by $\frac{q}{2e^{1/q}}$ for $q<\infty$ and $\frac{1}{2}$ for $q=\infty$ in order to obtain the result for $X^{(n)}$ uniformly distributed in $\ID_{q,1}$.
\end{proof}

Finally, we present the proof of the threshold result for the asymptotic volume of intersections of normalized Lorentz and $\ell_r^n$ balls. The proof is based on the weak law of large numbers for the $\ell_r^n$ norm (see Theorem \ref{pro:lln-norm}). Recall that for each $r\in (0,\infty)$
\begin{equation}\label{eq:dirichlet}
	\vol_n(\IB_r^n)^{1/n} = \frac{2\Gamma(1+\frac{1}{r})}{\Gamma\big(1+\frac{n}{r}\big)^{1/n}}\sim 2\Gamma\big(1+\frac{1}{r}\big)(er)^{1/r}n^{-1/r}, \quad\text{as } n\to\infty,
\end{equation}
which is known at least since the work \cite{D1839} of Dirichlet.

\begin{proof}[Proof of Corollary~\ref{pro:intersect}]
We first prove the statement for $q<\infty$. For $X^{(n)}$ uniformly distributed on $\ID_{q,1}^n$, we can write
\[
\vol_n\big(\ID_{q,1}^n\cap t\ID_r^n\big)
=\IP\Bigg[\|X^{(n)}\|_r\le \frac{t}{\vol_n(\IB_r^n)^{1/n}}\Bigg].
\]
Then, simply rewriting the previous expression, we obtain
\[
\vol_n\big(\ID_{q,1}^n\cap t\ID_r^n\big) = \IP\Bigg[n^{-1/r}\|X^{(n)}\|_r-m_{q,r}\le \frac{t}{n^{1/r}\vol_n(\IB_r^n)^{1/n}}-m_{q,r}\Bigg],
\]
with $m_{q,r}$ as in Theorem \ref{pro:lln-norm}, in particular for $r<\infty$,
\[
m_{q,r}=\frac{1}{2e^{1/q}}\frac{q}{q-1}\left(\frac{\Gamma(r+1)\Gamma\Big(1+\frac{q}{q-1}\Big)}{\Gamma\Big(r+1+\frac{q}{q-1}\Big)}\right)^{1/r}.
\]
From the stated volume radius asymptotics in \eqref{eq:dirichlet}, we know that
\[
\frac{1}{n^{1/r}\vol_n(\IB_r^n)^{1/n}}
\stackrel{n\to\infty}{\longrightarrow} c_{q,r}:=
\begin{cases}
\frac{1}{2(er)^{1/r}\Gamma(1+1/r)} & \colon  r<\infty\\
\frac{1}{2}& \colon  r=\infty.
\end{cases}
\]
Therefore, in view of the weak law of large numbers in Theorem \ref{pro:lln-norm}, the probability tends to one if $tc_{q,r}>m_{q,r}$ and to zero if $tc_{q,r}<m_{q,r}$. Hence, the statement holds with threshold $A_{q,r}=c_{q,r}(m_{q,r})^{-1}$.

If $q=\infty$, we can proceed analogously using $m_{\infty,r}=\frac{1}{2}\Big(\frac{1}{r+1}\Big)^{1/r}$. This completes the proof.
%
%
\end{proof}

\section{The conjecture for general $p>1$ --- Maximum entropy heuristics}

Let us describe the heuristics leading us to Conjecture \ref{conj:allp} and specifically to the differential equation that appears in it. Our arguments are based on maximum entropy considerations \cite{RAS2015}, but are not mathematically rigorous (see \cite{KP2021} for a similar rigorous result in the case of Orlicz balls). We shall denote by $\mathscr{M}_1(\IR_+)$ the set of probability measures on $\IR_+$ equipped with the weak topology, which is a Polish space. Since Lorentz balls belong to the class of $1$-symmetric convex bodies, we can restrict considerations to the positive orthant (see also Lemma~\ref{lem:symmetry}) and consider a uniformly distributed random vector $X^{(n)}$ in
\[
\lop
=\Bigg\{x\in\IR^n_+\colon \frac{1}{n}\sum_{i=1}^{n}\Big(\frac{i}{n}\Big)^{p/q-1}|x_i^*|^p\le 1\Bigg\}.
\]
We now consider a sequence of independent and identically distributed ``random variables'' $Y_1,Y_2,\dots$ which we assume to be uniformly ``distributed'' according to the infinite Lebesgue measure $\lambda$ on $\IR_+$. Conditioned on $Y^{(n)}:=(Y_1,\dots,Y_n)\in \lop$, which is a so-called energy constraint, the ``random vector'' $Y^{(n)}$ is uniformly distributed in $\lop$. The maximum entropy principle states (under suitable conditions) that the random empirical probability measure $L_{Y^{(n)}}$ associated to $Y^{(n)}$, where for $x\in\IR^n$
\[
L_{x}:=\frac{1}{n}\sum_{i=1}^{n}\delta_{x_i},
\]
converges, when conditioned upon the rare event of remaining in a closed convex set $K\subset \mathscr{M}_1(\IR_+)$, to the measure $\mu^*$ minimizing the relative entropy
\[
H(\mu|\lambda)
:= \begin{cases}
	\int_0^{\infty} p(x)\log p(x) \,\lambda(\dd x) & : p=\frac{\dd \mu}{\dd \lambda} \text{ exists},\\
	\infty & \text{otherwise}
\end{cases}
\]
over $K$ (c.f. Sanov's theorem \cite[Section 5.2]{RAS2015} and its version for infinite measures \cite{BS2016}).
In order to rewrite the conditioning $Y^{(n)}\in \lop$ in terms of the empirical measure, we use the quantile function (i.e., the 
inverse cumulative distribution function), which for any probability measure $\mu\in\pr$ is given by
\[
Q_{\mu}\colon [0,1]\to [0,\infty],\quad Q_{\mu}(t):=\inf\big\{s\geq0\colon \mu( [0,s])\ge t\big\}.
\]
The following lemma follows from a simple rewriting which also appears in the analysis of L-statistics, see, e.g., \cite{Boi07}.

\begin{lemma} \label{lem:transfer}
Let $n\in\IN$. For any $x\in\IR_+^n$, we have
\[
x\in\lop \,\,\Longleftrightarrow\,\, L_{x}\in K_n:=\Bigg\{\mu\in\pr\colon \int_0^1 Q_{\mu}(t)^p J_n(t)\,\dd t\le 1\Bigg\},
\]
where
\[
J_n(t)
:=\sum_{i=1}^n\Big(1-\frac{i-1}{n}\Big)^{p/q-1} \mathbbm{1}_{(\frac{i-1}{n},\frac{i}{n}]}(t).
\]
\end{lemma}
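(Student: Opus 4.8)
The plan is to prove both implications by directly rewriting the defining inequality of $\lop$ in terms of the order statistics, and then recognizing the resulting sum as an integral against the quantile function of the empirical measure $L_x$. First I would fix $x=(x_1,\dots,x_n)\in\IR_+^n$ and recall that $x_1^*\ge\cdots\ge x_n^*\ge 0$ denotes the non-increasing rearrangement. The key observation is that the empirical measure $L_x=\frac1n\sum_{i=1}^n\delta_{x_i}$ has quantile function $Q_{L_x}$ which is constant on each interval $\big(\frac{i-1}{n},\frac{i}{n}\big]$, and on that interval it equals the $i$-th value in the \emph{non-decreasing} order, i.e.\ $Q_{L_x}(t)=x_{n-i+1}^*$ for $t\in\big(\frac{i-1}{n},\frac{i}{n}\big]$. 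Equivalently, for such $t$ one has $Q_{L_x}(t)^p = (x_{n-i+1}^*)^p$.

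Next I would compute the integral appearing in $K_n$. Using the step structure of both $Q_{L_x}$ and $J_n$, and the fact that each interval $\big(\frac{i-1}{n},\frac{i}{n}\big]$ has length $\frac1n$, we get
\[
\int_0^1 Q_{L_x}(t)^p J_n(t)\,\dd t
=\frac1n\sum_{i=1}^n (x_{n-i+1}^*)^p\Big(1-\frac{i-1}{n}\Big)^{p/q-1}.
\]
Now I would reindex the sum via $j=n-i+1$, so that $i-1=n-j$ and $1-\frac{i-1}{n}=\frac{j}{n}$, turning the right-hand side into $\frac1n\sum_{j=1}^n (x_j^*)^p\big(\frac{j}{n}\big)^{p/q-1}$. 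This is exactly the quantity that defines membership in $\lop$, so $\int_0^1 Q_{L_x}(t)^p J_n(t)\,\dd t\le 1$ if and only if $\frac1n\sum_{j=1}^n\big(\frac{j}{n}\big)^{p/q-1}|x_j^*|^p\le1$, which is the claimed equivalence $x\in\lop\Longleftrightarrow L_x\in K_n$.

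The only genuinely delicate point — and the one I would state carefully — is the identification of $Q_{L_x}$ on the dyadic-type intervals when the $x_i$ have ties or when some $x_i=0$; here one should note that the definition $Q_\mu(t)=\inf\{s\ge0:\mu([0,s])\ge t\}$ is left-continuous and yields exactly the order statistics as described, and ties cause no problem because the sum $\frac1n\sum_j(x_j^*)^p(\tfrac jn)^{p/q-1}$ is manifestly invariant under the choice of tie-breaking. Everything else is a routine change of variables in a finite sum. Thus the main obstacle is purely bookkeeping: getting the correspondence between "$i$-th from the top" in the Lorentz norm and "$i$-th from the bottom" in the quantile function right, which is handled by the reindexing $j=n-i+1$ above. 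I would also remark that this is precisely the rewriting used in the theory of L-statistics (cf.\ \cite{Boi07}), which motivates citing it rather than belaboring the elementary verification.
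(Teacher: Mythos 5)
Your proposal is correct and follows essentially the same route as the paper: identify $Q_{L_x}(t)=x_{n-i+1}^*$ on each interval $\bigl(\frac{i-1}{n},\frac{i}{n}\bigr]$, use the step structure of $Q_{L_x}$ and $J_n$ to convert the integral into a Riemann sum with weight $\frac1n$, and reindex via $j=n-i+1$ to recover the defining inequality of $\lop$. The additional remark about ties and left-continuity of the quantile function is a fine (if optional) point of care; no gaps.
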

\begin{proof}
By definition
\[
x\in\lop\quad
\Longleftrightarrow\quad \frac{1}{n}\sum_{i=1}^{n}\Big(\frac{i}{n}\Big)^{p/q-1}|x_i^*|^p\le 1.
\]
Moreover, we note that for $t\in[0,1]$
\[
Q_{L_{x}}(t)
=\inf\big\{s\geq 0 \,:\, L_{x}(	[0,s])\ge t\big\}
=\inf\Big\{s\geq 0\,\colon\, \frac{1}{n}\sum_{i=1}^{n}\delta_{x_i}([0,s])\ge t\Big\}\ge 0.
\]
The function $s\mapsto \frac{1}{n}\sum_{i=1}^{n}\delta_{x_i}([0,s])$ returns the proportion of coordinates which are at most $s$. So the quantile function evaluated at $t$ is the minimal $s$ such that this proportion still exceeds $t$. If $t\in (\frac{i-1}{n},\frac{i}{n}]$, then this must be the value $s=x_{n-i+1}^*$ (the $i^{\text{th}}$ smallest coordinate) since after this point at least $i$ of the $n$ coordinates are at most $s$ and this is not true for any smaller value. Therefore, we have
\[
Q_{L_{x}}(t)
=x_{n-i+1}^* \quad \text{for } t\in \Big(\frac{i-1}{n},\frac{i}{n}\Big],\,\,i\in \{1,\dots,n\}.
\]
Now we observe further that, for all $t\in [0,1]$,
\begin{align*}
\frac{1}{n}\sum_{i=1}^{n}\Big(\frac{i}{n}\Big)^{p/q-1}|x_i^*|^p
&= \frac{1}{n}\sum_{i=1}^{n}\Big(1-\frac{i-1}{n}\Big)^{p/q-1}|x_{n-i+1}^*|^p\\
&= \frac{1}{n}\sum_{i=1}^{n}J_n(t)Q_{L_{x}}(t)^p\mathbbm{1}_{(\frac{i-1}{n},\frac{i}{n}]}(t).
\end{align*}
Since both functions are constant on $t\in (\frac{i-1}{n},\frac{i}{n}]$, the factor $1/n$ can be interpreted as the integral over these functions on this interval, i.e., 
\[
\frac{1}{n}\sum_{i=1}^{n}J_n(t)Q_{L_{x}}(t)^p\mathbbm{1}_{(\frac{i-1}{n},\frac{i}{n}]}(t)
=\sum_{i=1}^{n}\int_{\frac{i-1}{n}}^{\frac{i}{n}}J_n(t)Q_{L_{x}}(t)^p\,\dd t
=\int_{0}^{1}J_n(t)Q_{L_{x}}(t)^p\,\dd t.
\]
This immediately proves the lemma.
\end{proof}

In view of Lemma \ref{lem:transfer}, we have in our setting
\[
Y^{(n)}\in \lop
\,\,\Longleftrightarrow\,\, L_{Y^{(n)}}\in K_n.
\]
To apply variants of the Gibbs conditioning principle as the one used, e.g., in \cite{FP2021} and \cite{KR18}, we have to account for the fact that the set $K_n$ depends on $n$. For large $n$ it will be approximately equal to the set
\[
K
:=\Bigg\{\mu\in\pr\colon \int_0^1 Q_{\mu}(t)^p J(t)\,\dd t\le 1\Bigg\},
\]
where
\[
J\colon [0,1]\to\IR,\quad J(t)=(1-t)^{p/q-1},\quad t\in [0,1],
\]
is the pointwise limit of the sequence $J_n$, $n\in\IN$. We ignore the technical details.

In order to minimize the relative entropy over all measures in $K$, we can obviously ignore non-absolutely continuous measures for which the relative entropy is $\infty$. 
Let $F_\mu$ be the distribution function of the minimizer and 
$F_{\mu}'=\frac{\dd \mu}{\dd \lambda}$ its probability density function as well as $F_{\mu}^{-1}=Q_{\mu}$ the quantile function. Using the substitution $x=F^{-1}_{\mu}(y)$ and the fact that $Q_{\mu}'(y) = (F_{\mu}^{-1})'(y) = 1/F_{\mu}'(F_{\mu}^{-1}(y))$,
we can write
\[
H(\mu|\lambda)
=\int_0^{\infty} F_{\mu}'(x)\log F_{\mu}'(x) \,\lambda(\dd x)
=\int_0^{1} \log F_{\mu}'(F_{\mu}^{-1}(y))\, \lambda(\dd y)
=-\int_0^1 \log Q_{\mu}'(y)\, \lambda(\dd y).
\]
This leads to the following variational problem
\begin{align*}
	\max &\quad \int_0^{1}\log Q'(x)\, \dd x\\
	\text{s.t.}&\quad Q\colon [0,1]\to [0,\infty]\\
	&\quad Q\text{ is nondecreasing and differentiable}\\
	&\quad \int_0^1 Q(x)^p (1-x)^{p/q-1} \,\dd x\le 1.
\end{align*}
Its solution will be the quantile function of the minimizer $\mu^*\in K$ of the relative entropy and thus of the supposed limiting distribution. For the sake of convenience, we shall set $\alpha:=p/q-1\in [-1,0]$. In the following, we heuristically compute a solution to this problem and derive a differential equation for the corresponding distribution function.

Let us compute the variation and set $P:=Q+\varepsilon g$ for a small $\varepsilon>0$ and a continuously differentiable function $g\colon [0,1]\to\IR$ such that $g(0)=0$: 
\[
\int_0^1 \log (Q+\varepsilon g)'\dd x
=\int_0^{1}\log Q'(x) \,\dd x+\int_0^1\log \Big(1+\varepsilon \frac{g'(x)}{Q'(x)}\Big) \,\dd x.
\]
We expect that as $\varepsilon$ becomes small, the following approximation
\[
\int_0^1\log \Big(1+\varepsilon \frac{g'(x)}{Q'(x)}\Big)\,\dd x
= \varepsilon\int_0^1  \frac{g'(x)}{Q'(x)}\,\dd x +o(\varepsilon)
\]
is valid and hence
\[
\varepsilon^{-1}\Big(\int_0^1 \log (Q+\varepsilon g)'\,\dd x-\int_0^{1}\log Q'(x)\,\dd x\Big)
\stackrel{\varepsilon\to 0}{\longrightarrow}\int_0^1  \frac{g'(x)}{Q'(x)}\dd x.
\]
We now look at the constraints and compute
\[
\int_0^1 \big(Q(x)+\varepsilon g(x)\big)^p (1-x)^{\alpha} \,\dd x
=\int_0^1 Q(x)^p\Big(1+\varepsilon \frac{g(x)}{Q(x)}\Big)^p (1-x)^{\alpha} \,\dd x.
\]
Again, we expect that, as $\varepsilon$ becomes small, the approximation
\begin{align*}
\int_0^1 Q(x)^p\Big(1+\varepsilon \frac{g(x)}{Q(x)}\Big)^p (1-x)^{\alpha} \,\dd x
&= \int_0^1 Q(x)^p\Big(1+\varepsilon \frac{pg(x)}{Q(x)}\Big) (1-x)^{\alpha} \,\dd x + o(\varepsilon)\\
&=\int_0^1 Q(x)^p(1-x)^{\alpha}\dd x+\varepsilon \int_0^1 pg(x)Q(x)^{p-1} (1-x)^{\alpha} \,\dd x + o(\varepsilon)
\end{align*}
is valid. Hence, 
\[
\varepsilon^{-1}\Big(\int_0^1 (Q+\varepsilon g)^p (1-x)^{\alpha}\dd x-\int_0^{1} Q(x)^p (1-x)^{\alpha} \,\dd x\Big)
\stackrel{\varepsilon\to0}{\longrightarrow}\int_0^1 pg(x)Q(x)^{p-1} (1-x)^{\alpha} \,\dd x.
\]

Therefore, we only allow functions $g:[0,1]\to\IR$ with $g(0)=0$ and
\[
p\int_0^1 g(x)Q(x)^{p-1} (1-x)^{\alpha}\,\dd x=0.
\]
A necessary condition for the maximizer is that for every such $g$,
\[
0=\int_0^1  \frac{g'(x)}{Q'(x)} \,\dd x
=\frac{g(1)}{Q'(1)}-\int_0^1 g(x)\Big(\frac{1}{Q'}\Big)'(x) \,\dd x = \int_0^1 g(x)\Big(\frac{\delta_{1}(x)}{Q'(1)}-\Big(\frac{1}{Q'}\Big)'(x) \Big)\,\dd x,
\]
where in the second equality we used partial integration. We now use the following result.
\begin{lemma} \label{lem:variation}
Let $\mu_1,\mu_2$ be signed measures on $[0,1]$ such that if $\int_0^1 g(x)\mu_1(\dd x) =0 $ for some $g\in C[0,1]$ with $g(0)=0$, then also $\int_0^1 g(x)\mu_2(\dd x) =0$. Then, $\mu_2 = \alpha \mu_1+\beta \delta_0$ for some constants $\alpha,\beta \in\IR$.
\end{lemma}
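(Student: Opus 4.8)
The plan is to reduce the statement to a standard fact: a linear functional on a space of test functions that vanishes on the kernel of finitely many other functionals is a linear combination of those functionals. Concretely, set $V := \{g \in C[0,1] : g(0)=0\}$ and consider the two linear functionals $\Phi_1, \Phi_2 \colon V \to \IR$ given by $\Phi_j(g) := \int_0^1 g\,\dd\mu_j$. The hypothesis says precisely that $\ker\Phi_1 \subseteq \ker\Phi_2$. First I would dispose of the trivial case: if $\mu_1$ restricted to $(0,1]$ is the zero measure, then $\Phi_1 \equiv 0$ on $V$, so $\ker\Phi_1 = V$ and hence $\Phi_2 \equiv 0$ on $V$ as well; this forces $\mu_2$ to be supported on $\{0\}$, i.e. $\mu_2 = \beta\delta_0$, and we are done with $\alpha = 0$.

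In the main case $\Phi_1 \not\equiv 0$ on $V$, pick $g_0 \in V$ with $\Phi_1(g_0) \neq 0$ and set $\alpha := \Phi_2(g_0)/\Phi_1(g_0)$. Then for an arbitrary $g \in V$ the function $h := g - \frac{\Phi_1(g)}{\Phi_1(g_0)} g_0$ lies in $V$ and satisfies $\Phi_1(h) = 0$, hence $\Phi_2(h) = 0$ by hypothesis; expanding this gives $\Phi_2(g) = \alpha\,\Phi_1(g)$ for every $g \in V$. In other words $\int_0^1 g\,\dd(\mu_2 - \alpha\mu_1) = 0$ for all continuous $g$ with $g(0)=0$. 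It remains to conclude that a signed measure $\nu := \mu_2 - \alpha\mu_1$ on $[0,1]$ with $\int_0^1 g\,\dd\nu = 0$ for all $g\in V$ must be a multiple of $\delta_0$. For this I would test against $g(x) = x\,\psi(x)$ with $\psi \in C[0,1]$ arbitrary: this gives $\int_0^1 x\,\psi(x)\,\dd\nu(x) = 0$ for all continuous $\psi$, so by the Riesz representation theorem the measure $x\,\dd\nu(x)$ (a finite signed Borel measure on $[0,1]$) is zero; hence $\nu$ restricted to $(0,1]$ vanishes, i.e. $\nu = \beta\delta_0$ with $\beta := \nu(\{0\})$. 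Setting things together yields $\mu_2 = \alpha\mu_1 + \beta\delta_0$.

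A small point worth flagging: one should check that $\nu$ is a finite signed measure so that Riesz representation and the cancellation step are legitimate; since $\mu_1,\mu_2$ are assumed to be (finite) signed measures on the compact set $[0,1]$, this is automatic, and the Jordan decomposition makes $x\,\dd\nu(x)$ a genuine finite signed measure. I expect the only genuine subtlety — and the step I would write most carefully — is the last one, namely arguing that vanishing of $\int g\,\dd\nu$ over the codimension-one subspace $V$ (rather than all of $C[0,1]$) still pins down $\nu$ up to an atom at $0$; the multiplication-by-$x$ trick is the clean way around it, since it maps $C[0,1]$ into $V$ and its image separates measures on $(0,1]$. Everything else is routine linear algebra on functionals.
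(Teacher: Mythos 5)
Your proof is correct. The paper states Lemma~\ref{lem:variation} without supplying a proof, so there is nothing to compare against; your argument --- the standard linear-algebra fact that kernel containment of functionals implies proportionality, followed by the multiplication-by-$x$ trick and Riesz uniqueness to identify the residual measure $\nu=\mu_2-\alpha\mu_1$ as an atom at $0$ --- is exactly the natural way to fill this gap, and the regularity/finiteness points you flag are indeed the only ones needing care (both are automatic for finite signed Borel measures on the compact interval $[0,1]$).
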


Applied to our situation above, 
%
%
we conclude that there exists some $c\neq 0$ such that, for all $x\in [0,1]$,
\[
\Big(\frac{1}{Q'}\Big)'(x)=cQ(x)^{p-1} (1-x)^{\alpha}\quad\text{and}\quad\frac{1}{Q'(1)}=0.
\]
A substitution yields
\[
F''(x)=cF'(x)(1-F(x))^{\alpha}x^{p-1}, \quad x\in \IR_+.
\]
We must have $F(0)=0$ and there exists $r>0$ with $F(x)=1$ for $x\ge r$ ($r=+\infty$ is possible). Also the condition on $Q$ translates to (if we maximize)
\[
1= \int_0^r x^p (1-F(x))^{\alpha}F'(x) \,\dd x = \frac{1}{c}\int_0^r x F''(x)\,\dd x.
\]
Using that $F'(r)=0$, which follows from the fact $Q'(1)= + \infty$, together with integration by parts, we see that the right-hand side is $-\frac{1}{c}$. Hence, we conclude $c=-1$ leading to the desired differential equation. 

\bibliographystyle{plain}
\bibliography{lorentz}

\bigskip
\bigskip
	
	\medskip
	
	\small
	\noindent \textsc{Zakhar Kabluchko:} Faculty of Mathematics and Computer Science,
		University of M\"unster, Orl\'eans-Ring 10,
		48149 M\"unster, Germany.
		
		\noindent
		{\it E-mail:} \texttt{zakhar.kabluchko@uni-muenster.de}
	
	\bigskip
	\noindent \textsc{Joscha Prochno:} Faculty of Computer Science and Mathematics,
	University of Passau, Dr.-Hans-Kapfinger-Stra{\ss}e 30, 94032 Passau, Germany.
	
	\noindent
	{\it E-mail:} \texttt{joscha.prochno@uni-passau.de}
	
	\bigskip
	\noindent \textsc{Mathias Sonnleitner:} Faculty of Computer Science and Mathematics,
		University of Passau, Dr.-Hans-Kapfinger-Stra{\ss}e 30, 94032 Passau, Germany.
		
		\noindent
		{\it E-mail:} \texttt{mathias.sonnleitner@uni-passau.de}

\end{document}